\documentclass[11pt,twoside,a4paper]{article} 
\usepackage[T1]{fontenc}
\usepackage[utf8]{inputenc}

\usepackage[australian]{babel}

\usepackage[driver=pdftex,margin=3cm,heightrounded=true,centering,includeheadfoot]{geometry}

\usepackage{hyperref}

\usepackage[intlimits,tbtags]{mathtools} 
\usepackage{amssymb,amsfonts}
\usepackage{amsthm}
\usepackage[all]{xy}
\usepackage[capitalise]{cleveref}

\numberwithin{equation}{section}

\usepackage[dvipsnames]{xcolor}
\definecolor{MyBlue}{cmyk}{1,0.13,0,0.63}
\definecolor{MyGreen}{cmyk}{0.91,0,0.88,0.52}
\definecolor{MyRed}{rgb}{.6,0,0}
\newcommand{\mylinkcolor}{MyBlue}
\newcommand{\mycitecolor}{MyGreen}
\newcommand{\myurlcolor}{MyRed}

\theoremstyle{plain}
\newtheorem{thm}{Theorem}[section]
\newtheorem{lem}[thm]{Lemma}
\newtheorem{prop}[thm]{Proposition}
\newtheorem{coro}[thm]{Corollary}
\theoremstyle{definition}
\newtheorem{defn}[thm]{Definition}
\newtheorem{remark}[thm]{Remark}

\newtheorem{assumption}[thm]{Assumption}
\newtheorem{setting}[thm]{Setting}

\renewcommand{\eqref}[1]{\labelcref{#1}}
\crefname{thm}{Theorem}{Theorems}
\crefname{lem}{Lemma}{Lemmas}
\crefname{prop}{Proposition}{Propositions}
\crefname{coro}{Corollary}{Corollaries}
\crefname{defn}{Definition}{Definitions}
\crefname{example}{Example}{Examples}
\crefname{remark}{Remark}{Remarks}
\crefname{assumption}{Assumption}{Assumptions}

\hypersetup{%
  bookmarksnumbered=true,bookmarksopen=false,%
  plainpages=false,
  linktocpage=true,%
  colorlinks=true,breaklinks=true,%
  linkcolor=\mylinkcolor,citecolor=\mycitecolor,urlcolor=\myurlcolor,%
  pdfpagelayout=OneColumn,%
  pageanchor=true,%
}

\makeatletter
\def\@endtheorem{\endtrivlist}
\def\thm@space@setup{%
  \thm@preskip=4pt plus 2pt minus 2pt
  \thm@postskip=\thm@preskip
}
\renewenvironment{proof}[1][\proofname]{\par
  \pushQED{\qed}%
  \normalfont \topsep4\p@\relax 
  \trivlist
  \item[\hskip\labelsep
        \itshape
    #1\@addpunct{.}]\ignorespaces
}{%
  \popQED\endtrivlist\@endpefalse
}
\makeatother

\usepackage[shortlabels]{enumitem}
\setlist{topsep=4pt plus 2pt minus 2pt,partopsep=0pt,itemsep=2pt plus 2pt minus 2pt,parsep=0.5\parskip}
\setenumerate[1]{label=(\arabic*),font=\upshape}

\usepackage{fancyhdr}
\newcommand{\MR}[1]{}

\let\OLDthebibliography\thebibliography
\renewcommand\thebibliography[1]{
  \addcontentsline{toc}{section}{\refname}
  \OLDthebibliography{#1}
  \setlength{\parskip}{0pt}
  \setlength{\itemsep}{0pt plus 0.3ex}
}

\newcommand{\N}{\mathbb{N}}
\newcommand{\R}{\mathbb{R}}
\newcommand{\C}{\mathbb{C}}
\newcommand{\Z}{\mathbb{Z}}
\newcommand{\A}{\mathcal{A}}
\newcommand{\mH}{\mathcal{H}}
\newcommand{\mK}{\mathcal{K}}
\newcommand{\mL}{\mathcal{L}}
\newcommand{\D}{\mathcal{D}}
\newcommand{\B}{\mathcal{B}}

\DeclareMathOperator{\Dom}{Dom}
\DeclareMathOperator{\Ran}{Ran}
\DeclareMathOperator{\Ker}{Ker}
\DeclareMathOperator{\supp}{supp}
\DeclareMathOperator{\Index}{Index}
\DeclareMathOperator{\ind}{ind}
\DeclareMathOperator{\relind}{rel-ind}
\DeclareMathOperator{\Sig}{Sig}
\DeclareMathOperator{\SF}{sf}

\newcommand{\ev}{{\mathrm{ev}}}
\newcommand{\od}{{\mathrm{od}}}
\newcommand*{\K}{\relax\ifmmode{K}\else{\itshape K}\fi}
\newcommand*{\KK}{\relax\ifmmode{K\!K}\else{\itshape K\hspace{-.07em}K}\fi}
\newcommand{\into}{\hookrightarrow}
\newcommand{\sfarrow}{\rightarrow}

\newcommand{\mattwo}[4]{
  \begin{pmatrix}#1&#2\\ #3&#4\end{pmatrix}
}

\def\MyTitle{A K-theoretic note on the spectral localiser}
\title{\MyTitle}
\author{
Koen van den Dungen
\\[2mm]
{\small Mathematisches Institut}, 
{\small Universität Bonn}\\
{\small Endenicher Allee 60, D-53115 Bonn}\\
{\small kdungen@uni-bonn.de}
}
\date{}

\begin{document}

\maketitle

\begin{abstract}
\noindent
We review the construction of the spectral localiser (due to Loring and Schulz-Baldes) from a \K-theoretic perspective. 
We first give a \K-theoretic argument providing a spectral flow expression for the even or odd index pairing in terms of the ``infinite volume'' spectral localiser. 
Our approach towards this first step is more direct, treats the even and odd cases on an equal footing, and has the advantage that the construction of the spectral localiser becomes immediately apparent from the computation of the index pairing via a Kasparov product. 
In a second step of ``spectral truncation'', we then describe how this spectral flow expression can be computed in terms of the signature of the ``finite volume'' spectral localiser. 
Throughout, we do not require invertibility of the operator representing the \K-homology class, and the even index pairing then obtains an additional contribution coming from the Fredholm index. 

\vspace{\baselineskip}
\noindent
\emph{Keywords}: 
Spectral localiser,
\K-theory, 
index theory, 
spectral flow. 

\noindent
\emph{Mathematics Subject Classification 2020}: 
46L80; 
19K56, 
58J30. 
\end{abstract}


\section{Introduction}

Spectral localisers were recently introduced by Loring and Schulz-Baldes \cite{LS17,LS20} as a method for computing an even ($j=0$) or odd ($j=1$) index pairing 
\begin{equation}
\label{eq:index_pairing}
\K_j(A) \times \KK^j(A,\C) \longrightarrow \K_0(\C) \simeq \Z ,
\end{equation}
where $A$ is a unital $C^*$-algebra. Here the even or odd $\K$-homology class in $\KK^j(A,\C)$ is represented by a `generalised Dirac operator' $\D$, and the \K-theory class in $\K_j(A)$ is represented in the even case ($j=0$) by the positive spectral projection $P_{>0}(H)$ of a self-adjoint invertible $H\in M_n(A)$ or in the odd case ($j=1$) by an invertible $G\in M_n(A)$. 
The image of such an index pairing $[P_{>0}(H)] \otimes_A [\D]$ or $[G] \otimes_A [\D]$ can be described as a Fredholm index on an infinite-dimensional Hilbert space. 

Such index pairings are not only of purely mathematical interest, but they also appear for instance in condensed matter physics as topological invariants in the theory of (disordered) topological insulators \cite{BES94,Prodan--Schulz-Baldes16}. In this case, the value of the index pairing corresponds to an experimentally measurable quantity. 
Loring and Schulz-Baldes introduced the spectral localiser as a powerful method for computing such quantities. Indeed, the main purpose of the spectral localiser is to reduce the index pairing to the computation of a \emph{finite} matrix. 
As such, this breakthrough makes the computation of the index invariant in \K-theory accessible to numerical calculations and has already seen various applications (see e.g.\ \cite{SS22,CL22,FG24,CL24,Sto25}). 

The main results on the (even or odd) spectral localiser by Loring and Schulz-Baldes have been reproven in \cite{LS19,LSS19} using a spectral flow approach (see also the book \cite[Ch.10]{DSW2023}). 
Very recently, Li--Mesland \cite{LM25pre} and Kaad \cite{Kaa25pre} have studied spectral localisers from the perspective of unbounded \KK-theory, considering a generalised index pairing $\K_j(A) \times \KK^j(A,B) \to \K_0(B)$, where $B$ is another $C^*$-algebra. 
Li--Mesland \cite{LM25pre} use E-theory to describe the \emph{odd} index pairing \eqref{eq:index_pairing} (with $j=1$) in terms of the asymptotic morphism $\K_1(A) \simeq \K_0(C_0(\R,A)) \to \K_0(\C)$ associated to the \K-homology class in $\KK^1(A,\C)$. 
Kaad \cite{Kaa25pre} focuses on generalising the \emph{even} index pairing \eqref{eq:index_pairing} (with $j=0$) to the bivariant setting $\K_0(A) \times \KK^0(A,B) \to \K_0(B)$. 

In this article, our main goal is to provide a new \K-theoretic perspective on the (even or odd) spectral localiser. 
As in \cite{LM25pre}, we start in \cref{sec:index_pairing} by employing the Bott periodicity isomorphism $\K_j(A) \simeq \K_{j+1}(C_0(\R,A))$, but we represent the resulting \K-theory class in $\K_{j+1}(C_0(\R,A))$ by a Fredholm operator on a Hilbert $C_0(\R,A)$-module (instead of as a formal difference of projections, 
as in \cite[Lemma 3.1]{LM25pre}). Our Fredholm representative then allows us to obtain the (even or odd) index pairing \eqref{eq:index_pairing} via a relatively straightforward computation of the pairing 
\begin{equation}
\label{eq:index_Kasp_prod}
\K_{j+1}(C_0(\R,A)) \times \KK^j(A,\C) \longrightarrow \K_1(C_0(\R)) \simeq \K_0(\C) \simeq \Z 
\end{equation}
as a Kasparov product in the unbounded picture of \KK-theory. 
The result is described as the spectral flow of a certain path of self-adjoint Fredholm operators, where the (self-adjoint invertible) spectral localiser appears as one of the endpoints of this path (the other endpoint is also a spectral localiser, but comes from a trivial \K-theory class). 
More precisely, for any $\kappa>0$, the even or odd (infinite volume) spectral localiser corresponding to the pair $(H,\D)$ in the even case or $(G,\D)$ in the odd case is defined by 
\begin{align*}
L^\ev_\kappa(H,\D) &:= \mattwo{H}{\kappa\D_-}{\kappa\D_+}{-H} , & 
L^\od_\kappa(G,\D) &:= \mattwo{\kappa\D}{G}{G^*}{-\kappa\D} ,
\end{align*}
where in the even case we have decomposed the $\Z_2$-graded operator $\D = \mattwo{0}{\D_-}{\D_+}{0}$. 
We then prove in \cref{thm:even_index_pairing_spec_loc} that the even index pairing can be computed (for sufficiently small $\kappa$) as 
\begin{align}
\label{eq:even_sf_exp}
[P_{>0}(H)] \otimes_A [\D] 
= \SF\big( L^\ev_\kappa(-1,\D) \sfarrow L^\ev_\kappa(H,\D) \big) 
\in \K_0(\C) \simeq \Z ,
\end{align}
where $\SF( \cdot \sfarrow \cdot )$ denotes the spectral flow along the straight line path. 
Similarly, we prove in \cref{thm:odd_index_pairing_spec_loc} that the odd index pairing can be computed (for sufficiently small $\kappa$) as 
\begin{align}
\label{eq:odd_sf_exp}
[G] \otimes_A [\D] 
= \SF\big( L^\od_\kappa(1,\D) \sfarrow L^\od_\kappa(G,\D) \big) 
\in \K_0(\C) \simeq \Z .
\end{align}
Compared to the existing literature on spectral localisers, our approach is more direct (it does not require any intermediate homotopies) and has the particular advantage that the construction of the spectral localiser becomes immediately apparent from the computation of the Kasparov product \eqref{eq:index_Kasp_prod}. 
Moreover, our approach applies to both the even and the odd index pairing, treating both cases on an equal footing. 

Next, in \cref{sec:spectral_truncation}, we describe the \emph{spectral truncation} of the spectral localiser. The \emph{truncated} spectral localiser $L^\ev_{\kappa,\rho}(H,\D)$ or $L^\od_{\kappa,\rho}(G,\D)$ is a finite matrix, which is obtained by compressing the spectral localiser to a finite-dimensional spectral subspace (specified by the parameter $\rho$) of the operator $\D$ representing the \K-homology class $[\D] \in \KK^j(A,\C)$. Loring and Schulz-Baldes \cite{LS17,LS20} have shown that (for suitable $\kappa,\rho$`) this truncated spectral localiser is \emph{invertible}. 
Combining this fact with our spectral flow expression \eqref{eq:even_sf_exp} or \eqref{eq:odd_sf_exp}, we can express the (even or odd) index pairing in terms of the \emph{signature} of the truncated spectral localiser. In contrast to the work of Loring and Schulz-Baldes, we do not require that $\D$ is invertible. 
We will prove in \cref{thm:even_pairing_spec_loc} that the expression for the even index pairing then receives an additional term given by the Fredholm index of $\D$:
\[
[P_{>0}(H)] \otimes_A [\D] 
= \frac12 \Sig\big( L^\ev_{\kappa,\rho}(H,\D) \big) + \frac12 \Index(\D_+) .
\]
If $\D$ is invertible, then $\Index(\D_+) = 0$ and we recover the main result from \cite{LS20}. 
For the odd index pairing, there is no such index correction, and we will prove in \cref{thm:odd_pairing_spec_loc} that 
\[
[G] \otimes_A [\D] 
= \frac12 \Sig\big( L^\od_{\kappa,\rho}(G,\D) \big) .
\]
This recovers the main result of \cite{LS17}, without assuming $\D$ to be invertible.

For the convenience of the reader, we briefly recall in \cref{app:Fredholm} some Fredholm theory for operators on Hilbert $C^*$-modules over a $C^*$-algebra $A$, and in particular describe the (even or odd) relative index and the (even or odd) spectral flow, as defined in a general setting for operators on Hilbert $A$-modules. 
Finally, \cref{app:Kasp_prod} contains a detailed computation of the Kasparov product providing the pairing \eqref{eq:index_Kasp_prod}.

\section{Spectral localisers and index pairings}
\label{sec:index_pairing}

\subsection{The even index pairing}
\label{sec:even_index_pairing}

\begin{setting}
\label{setting:even}
Consider a unital $C^*$-algebra $A$. 
Let $\A \subset A$ be a dense unital $*$-subalgebra, and consider an even spectral triple $(\A,\mH,\D)$ over $A$ representing a \K-homology class $[\D] \in \KK^0(A,\C)$. 
To be precise, this means that we have a $*$-representation $\pi\colon A\to\B(\mH)$ on a Hilbert space $\mH$, and a (densely defined) self-adjoint operator $\D\colon\Dom(\D)\subset\mH\to\mH$ with compact resolvents, such that for all $a\in\A$ we have $\pi(a)\cdot\Dom(\D)\subset\Dom(\D)$ and $[\D,\pi(a)]$ extends to a bounded operator on $\mH$. 
We assume that the representation $\pi$ is \emph{nondegenerate}, i.e., $\pi(A)\cdot\mH$ is dense in $\mH$. 

Furthermore, consider a projection $p\in M_n(\A)$ for some $1\leq n\in\N$, representing a \K-theory class $[p] \in \K_0(A)$. 
We fix an invertible self-adjoint element $H \in M_n(\A)$ whose positive spectral projection $P_{>0}(H)$ equals $p$:
\[
P_{>0}(H) := \frac12\big(1+H|H|^{-1}\big) = p 
\]
(for instance, we could choose $H = 2p-1$, but we allow for non-unitary choices as well). 
We denote the \emph{spectral gap} of $H$ by $g := \|H^{-1}\|^{-1}$. 

Since the spectral triple $(\A,\mH,\D)$ is even, there is a $\Z_2$-grading on $\mH$, i.e.\ a direct sum decomposition $\mH = \mH_+ \oplus \mH_-$ along with a grading operator $\Gamma = 1\oplus(-1)$. 
The operator $\D$ is odd and the representation $\pi$ is even, so we can write 
\begin{align*}
\D &= \mattwo{0}{\D_-}{\D_+}{0} , & 
\pi(a) &= \mattwo{\pi_+(a)}{0}{0}{\pi_-(a)} , \quad\forall a\in A .
\end{align*}
\end{setting}

By extending the representation $\pi = \pi_+ \oplus \pi_-$ to $M_n(A) \to \B(\mH^{\oplus n})$, 
we obtain a projection $\pi(p) \in \B(\mH^{\oplus n})$ as well as projections $\pi_+(p) \in \B(\mH_+^{\oplus n})$ and $\pi_-(p) \in \B(\mH_-^{\oplus n})$. 
It is well-known that the even index pairing of the \K-theory class $[p]$ with the \K-homology class $[\D]$ can be computed as a Fredholm index as follows: 
\[
[p] \otimes_A [\D] = \Index\big( \pi_-(p) \D_+ \pi_+(p) \big) .
\]
Alternatively, the even index pairing may be expressed in terms of the spectral flow, as follows. 
Consider the special case $\mH_+ = \mH_-$ and $\pi_+ = \pi_-$. 
Assuming $\D$ is invertible, we may consider the unitary $F_+ := \D_+ |\D_+|^{-1}$. 
Since $[\pi(H),F_+]$ is compact, we obtain from \cref{coro:sf_index_PuP} the equalities 
\begin{align*}
\Index \big( \pi(p) F_+ \pi(p) \big) 
&= \Index \big( P_{>0}(\pi(H)) F_+ P_{>0}(\pi(H)) \big) 
= \SF\big( \pi(H) \sfarrow F_+ \pi(H) F_+^* \big) .
\end{align*}

\subsubsection{The even spectral localiser}
\label{sec:even_spec_loc}

Our aim here is to provide a different spectral flow expression for the even index pairing in terms of the spectral localiser. 
We will not assume that $\D$ is invertible. 

\begin{defn}
\label{defn:even_spec_loc}
Let $H$ and $\D$ be as in \cref{setting:even}. 
The \emph{even spectral localiser} of the pair $(H,\D)$ with tuning parameter $\kappa\in(0,\infty)$ is defined as the operator 
\[
L^\ev_\kappa \equiv L^\ev_\kappa(H,\D) 
:= \kappa \D + \Gamma \pi(H) 
= \mattwo{\pi_+(H)}{\kappa\D_-}{\kappa\D_+}{-\pi_-(H)} 
\quad \text{on } (\Dom\D)^{\oplus n} \subset \mH^{\oplus n} .
\]
When no confusion arises, we will often suppress $\pi$ from our notation. 
\end{defn}

\begin{lem}
\label{lem:even_spec_loc_inv}
Assume $\kappa < g^2 \big\|[\D,H]\big\|^{-1}$. 
Then the even spectral localiser $L^\ev_\kappa$ is invertible. 
\end{lem}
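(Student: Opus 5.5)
We square the localiser. Write $L := L^\ev_\kappa = \kappa\D + \Gamma H$ on $(\Dom\D)^{\oplus n}$, suppressing $\pi$ as the paper does. Since $\Gamma$ commutes with $H$ (the representation is even) and $\Gamma$ anticommutes with $\D$, we formally get
\[
L^2 = \kappa^2\D^2 + H^2 + \kappa\big(\D\Gamma H + \Gamma H\D\big) = \kappa^2\D^2 + H^2 + \kappa\,\Gamma[H,\D] ,
\]
using $\D\Gamma H = -\Gamma\D H$. Here $\Gamma[H,\D]$ extends to a bounded operator with norm $\|[\D,H]\|$.

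We must be careful with domains: $L$ is self-adjoint on $\Dom\D$, because $\Gamma H$ is a bounded self-adjoint perturbation of $\kappa\D$. To avoid working with $\Dom(\D^2)$, we use quadratic forms instead. For $\xi\in\Dom\D$ we have
\[
\|L\xi\|^2 = \kappa^2\|\D\xi\|^2 + \|H\xi\|^2 + 2\kappa\,\mathrm{Re}\langle\D\xi,\Gamma H\xi\rangle .
\]
The cross term can be rewritten as $\kappa\langle\xi,\Gamma[H,\D]\xi\rangle$, which is valid because $H$ preserves $\Dom\D$ (as $H\in M_n(\A)$).

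The naive estimate is
\[
\|L\xi\|^2 \ge g^2\|\xi\|^2 - \kappa\|[\D,H]\|\,\|\xi\|^2 .
\]
This only gives invertibility for $\kappa < g^2\|[\D,H]\|^{-1}$ when combined with positivity of $\kappa^2\D^2$, and in fact the inequality $\|L\xi\|^2 \ge (g^2-\kappa\|[\D,H]\|)\|\xi\|^2$ is already exactly the required bound. Thus $\|L\xi\|\ge c\|\xi\|$ with $c^2 = g^2-\kappa\|[\D,H]\| > 0$. So $L$ is injective with closed range, and since it is self-adjoint its range is dense; hence $L$ is bijective with bounded inverse of norm at most $c^{-1}$.

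The main step to verify carefully is the identity for the cross term:
\[
2\,\mathrm{Re}\langle\D\xi,\Gamma H\xi\rangle = \langle \D\xi,\Gamma H\xi\rangle + \langle \Gamma H\xi,\D\xi\rangle .
\]
Since $\Gamma H\xi\in\Dom\D$, the second term equals $\langle\D\Gamma H\xi,\xi\rangle = -\langle\Gamma\D H\xi,\xi\rangle$. The first term equals $\langle\xi,\D\Gamma H\xi\rangle$, and after moving $\Gamma$ and $H$ across the inner product it gives $\langle \Gamma H \D\xi,\xi\rangle$ (using $\Gamma H=H\Gamma$ self-adjoint). So the sum is $\langle\Gamma(H\D-\D H)\xi,\xi\rangle$, which is bounded in absolute value by $\|[\D,H]\|\|\xi\|^2$.

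We also need $\|H\xi\|^2\ge g^2\|\xi\|^2$. This holds because $H$ is self-adjoint and invertible with $\|H^{-1}\|=g^{-1}$, and $\pi$ is a $*$-homomorphism, so $\|\pi(H)^{-1}\|\le g^{-1}$.
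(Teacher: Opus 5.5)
Your proof is correct and follows the paper's argument: square $L^\ev_\kappa$, drop $\kappa^2\D^2\geq0$, use $H^2\geq g^2$, and bound the cross term $\kappa\Gamma[H,\D]$ (which equals the paper's $\kappa[\D,H]\Gamma$) by $\kappa\|[\D,H]\|$. Working with $\|L\xi\|^2$ for $\xi\in\Dom\D$ makes rigorous the domain issues that the paper's operator inequality leaves implicit; the only point to add is that $\|\pi(H)^{-1}\|\leq g^{-1}$ uses that $\pi$ is unital, which follows from nondegeneracy of $\pi$ and unitality of $A$.
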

\begin{proof}
The square can be estimated by 
\begin{align*}
\big(L^\ev_\kappa\big)^2 
&= \big( \kappa \D + \Gamma H \big)^2
= \kappa^2 \D^2 + H^2 + \kappa [\D,H] \Gamma
\geq g^2 - \kappa \big\| [\D,H] \big\| 
> 0 ,
\end{align*}
where in the third step we used $\kappa^2\D^2\geq0$ and $H^2\geq g^2$, and in the last step we used the assumption $\kappa < g^2 \big\|[\D,H]\big\|^{-1}$. 
\end{proof}

\subsubsection{The spectral flow expression}
\label{sec:even_index_pairing_spec_loc}

Our starting point is an alternative representation of the even \K-theory class $[p] \in \K_0(A)$ based on the Bott periodicity isomorphism $\K_0(A) \simeq \K_1\big( C_0(\R)\otimes A \big)$. 
For this purpose, we consider continuous functions $\chi_\pm \colon \R \to [0,1]$ with the following properties:
\begin{align}
\label{eq:chi}
\begin{aligned}
& \supp \chi_+ \subset [0,\infty) \text{ and } \supp \chi_- \subset (-\infty,0], \text{ and }\\
& \chi_+(t) = 1 \text{ for } t\geq1 \text{ and } \chi_-(t) = 1 \text{ for } t\leq-1.
\end{aligned}
\end{align}

The following construction of the operator $S_H$ is analogous to the construction in the proof of \cite[Lemma B.3]{vdD25_DS_ind_sf_25b} (the idea goes back to \cite[Appendix A.1]{Wah09}). 
Recall that the Bott periodicity isomorphism $\K_1\big(C_0(\R,A)\big) \xrightarrow{\simeq} \K_0(A)$ is implemented by taking the Kasparov product with $[-i\partial_t] \in \KK^1\big( C_0(\R) , \C \big)$ (see \cref{app:Fredholm}). 

\begin{lem}
\label{lem:S_H}
Let $\chi_\pm \colon \R \to [0,1]$ be continuous functions satisfying the properties in \cref{eq:chi}. 
Consider the (bounded) self-adjoint Fredholm operator 
\[
S_H := -\chi_- + \chi_+ H 
\quad \text{on} \quad C_0(\R,A)^{\oplus n} ,
\]
representing a class $[S_H] \in \K_1\big(C_0(\R,A)\big)$. 
Then we have the equality 
\[
[p] = [S_H] \otimes_{C_0(\R)} [-i\partial_t] \in \K_0(A) .
\]
Thus the map $[p] \mapsto [S_H]$ implements the natural isomorphism $\K_0(A) \xrightarrow{\simeq} \K_1\big(C_0(\R,A)\big)$. 
\end{lem}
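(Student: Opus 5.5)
First I check that $S_H$ is a self-adjoint Fredholm operator on the Hilbert $C_0(\R,A)$-module $C_0(\R,A)^{\oplus n}$. Here $S_H$ acts by pointwise multiplication, $(S_H f)(t) = (-\chi_-(t) + \chi_+(t)H) f(t)$. Self-adjointness is clear since $\chi_\pm$ are real and $H=H^*$.

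For Fredholmness, note that $S_H(t)$ is invertible whenever $|t|\geq1$: it equals $-1$ for $t\le -1$ and $H$ for $t\ge1$. So $S_H$ is a bounded function of $t$ that is invertible outside the compact set $[-1,1]$. Such a function is invertible modulo $C_0(\R,M_n(A)) = \mK(C_0(\R,A)^{\oplus n})$. Concretely, choose any continuous $T(t)$ equal to $S_H(t)^{-1}$ for $|t|\geq1$. Then $1-TS_H$ and $1-S_HT$ are compactly supported continuous functions, hence compact operators. This gives a class $[S_H]\in\K_1(C_0(\R,A))$ via the odd Fredholm picture recalled in \cref{app:Fredholm}. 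The class is independent of the choice of $\chi_\pm$, since the set of such functions is convex and a straight-line homotopy preserves invertibility at infinity.

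To compute $[S_H]\otimes_{C_0(\R)}[-i\partial_t]$, I reduce to a standard case by homotopy. First I deform $H$ to $2p-1$ through $H_s = (1-s)H + s(2p-1)$. This stays invertible, because $H$ and $2p-1$ commute and have the same sign on each spectral subspace. Since $\chi_+H_s$ stays invertible near $+\infty$, this is a homotopy of Fredholm operators. Next I split $A^n = pA^n \oplus (1-p)A^n$. On the summand $(1-p)$ the operator becomes $-\chi_- - \chi_+$, which is homotopic to the constant $-1$ (invertible everywhere), so it gives the zero class. On the summand $p$ the operator is $\chi_+ - \chi_-$, a real scalar function going from $-1$ to $+1$, tensored with the projection $p$.

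It therefore suffices to show that the function $\phi=\chi_+-\chi_-$, viewed as an odd Fredholm operator on the $C_0(\R)$-module $C_0(\R)$, satisfies $[\phi]\otimes[-i\partial_t] = 1 \in \K_0(\C)$. Once that holds, naturality of the Kasparov product under exterior tensoring with $[p]\in\K_0(A)=\KK(\C,A)$ yields $[p\,\phi] \otimes_{C_0(\R)}[-i\partial_t] = [p]$. The scalar case is the normalisation of Bott periodicity: $\phi$ is the generator of $\K_1(C_0(\R))$ (a Cayley-transform-type generator), and pairing with the Dirac operator $-i\partial_t$ gives the index of a Toeplitz-type operator, or equivalently the spectral flow of $-i\partial_t + \phi$ type operators. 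This equals $\pm1$.

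The main obstacle is the orientation and sign convention in the last step: checking that the chosen conventions for $\K_1$ via self-adjoint Fredholm operators and for $[-i\partial_t]$ in \cref{app:Fredholm} give $+1$ rather than $-1$. I would settle it by one explicit computation. Take the unbounded product operator $-i\partial_t\otimes 1 + \ldots$, or compute $\Index(\partial_t + \phi(t))$ on $L^2(\R)$. Its kernel is spanned by $e^{-\int\phi}$, which is integrable because $\phi\to\pm1$ at $\pm\infty$, while the adjoint has trivial kernel. This gives index $1$, matching the convention for the class $[\D]$ in the appendix. The final statement, that $[p]\mapsto[S_H]$ implements the isomorphism, then follows because Bott periodicity says that taking the product with $[-i\partial_t]$ is an isomorphism, and $[S_H]$ is a preimage of $[p]$.
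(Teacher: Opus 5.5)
Your argument is correct in outline but takes a longer route than the paper. The paper does not deform $H$ or split off $p$. It applies \cref{prop:SF_Kasp_prod} directly to $S_H$, which is already constant outside $[-1,1]$, to get $[S_H]\otimes_{C_0(\R)}[-i\partial_t]=\SF(\{S_H(t)\}_{t\in[-1,1]})$. It then applies \cref{prop:sf_rel_cpt_family} (on the finitely generated projective module $A^{\oplus n}$ every bounded perturbation is compact) to identify this with $\relind(P_{>0}(H),P_{>0}(-1))=\relind(p,0)=[p]$. Since the appendix's spectral flow and relative index are already $\K_0(A)$-valued and only see the endpoint projections, a general invertible $H$ is handled at once.

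Your route uses only formal properties: homotopy invariance, additivity over $pA^n\oplus(1-p)A^n$, and the exterior product with the even class $[p]\in\KK(\C,A)$, where no sign arises. With these you reduce everything to the scalar Bott normalisation $[\chi_+-\chi_-]\otimes_{C_0(\R)}[-i\partial_t]=1$. This has the merit of working in any framework where only the scalar generator is known.

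The one soft spot is exactly that normalisation. Your computation $\Index(\partial_t+\phi)=1$ is right. However, you assert rather than check that the paper's conventions represent the odd--odd product by $\partial_t+\phi$ rather than $-\partial_t+\phi$, which has index $-1$. That sign is the whole content of the claim; compare \cref{lem:S_G}, where a minus sign does appear. Within the paper the clean fix is \cref{prop:SF_Kasp_prod}, which gives $[\phi]\otimes_{C_0(\R)}[-i\partial_t]=\SF(\{\phi(t)\}_{t\in[-1,1]})=\relind(1,0)=1$ on $\C$. But once you invoke that proposition, you may as well apply it to $S_H$ itself, which is precisely the paper's one-line proof.
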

\begin{proof}
From \cref{prop:sf_rel_cpt_family,prop:SF_Kasp_prod} we obtain 
\begin{align*}
[S_H] \otimes_{C_0(\R)} [-i\partial_t] 
&= \SF\big( \{S_H(t)\}_{t\in[-1,1]} \big) 
= \relind \big( P_{>0}(S_H(1)) , P_{>0}(S_H(-1)) \big) \\
&= \relind \big( P_{>0}(H) , P_{>0}(-1) \big) 
= \relind \big( p , 0 \big)
= [p] .
\qedhere
\end{align*}
\end{proof}

Recall the notation $\SF(T_0\sfarrow T_1)$ for the spectral flow of the straight line path from $T_0$ to $T_1$ (see \cref{eq:SF_endpoints}). 
We are now ready to provide a spectral flow expression for the even index pairing in terms of the spectral localiser. The proof relies on the computation of the Kasparov product $[S_H] \otimes_A [\D]$, which is described in \cref{app:Kasp_prod}. 
\begin{thm}
\label{thm:even_index_pairing_spec_loc}
Assume $\kappa < g^2 \big\|[\D,H]\big\|^{-1}$. 
The pairing of $[p] \in \K_0(A)$ with $[\D] \in \KK^0(A,\C)$ is given by 
\[
[p] \otimes_A [\D] 
= \SF\big( L^\ev_\kappa(-1,\D) \sfarrow L^\ev_\kappa(H,\D) \big) 
= \SF\big( \kappa\D-\Gamma \sfarrow \kappa\D+\Gamma H \big)
\in \K_0(\C) \simeq \Z .
\]
Moreover, if $\D$ is invertible, we obtain 
\[
[p] \otimes_A [\D] = \SF\big( \kappa\D \sfarrow L^\ev_\kappa(H,\D) \big) .
\]
\end{thm}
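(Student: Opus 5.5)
By \cref{lem:S_H} we may write $[p] = [S_H]\otimes_{C_0(\R)}[-i\partial_t]$. Using associativity and commutativity of the Kasparov product (the factors $C_0(\R)$ and $A$ act independently), we obtain
\[
[p]\otimes_A[\D] = \big([S_H]\otimes_A[\D]\big)\otimes_{C_0(\R)}[-i\partial_t].
\]
The plan is to compute the inner product $[S_H]\otimes_A[\D]\in\KK^1(C_0(\R),\C)\simeq\K_1(C_0(\R))$ explicitly, as in \cref{app:Kasp_prod}. The claim is that it is represented by the family of self-adjoint Fredholm operators
\[
t\mapsto \kappa\D + \Gamma\,\pi(S_H(t)) = \kappa\D - \chi_-(t)\Gamma + \chi_+(t)\Gamma H \quad\text{on } \mH^{\oplus n}.
\]
This is the standard unbounded-product recipe. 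The class $[S_H]$ is odd, and $[\D]$ is even with grading $\Gamma$, so the product operator should be the graded sum $\kappa\D\otimes 1 + \Gamma\otimes S_H$. Here $\kappa>0$ is harmless, since scaling $\D$ does not change its class. To verify that this is the Kasparov product, I would check Kucerovsky's conditions, or more directly check that $\kappa\D+\Gamma S_H(t)$ is a bounded perturbation of $\kappa\D$. Then $\kappa\D+\Gamma S_H$ defines a self-adjoint regular operator with locally compact resolvents on the Hilbert $C_0(\R)$-module $C_0(\R,\mH^{\oplus n})$. Two further points must be established. The connection condition is automatic, because $\D$ commutes with the $C_0(\R)$-coordinate. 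The positivity condition requires $[\D,H]$ to be bounded, which holds because $H\in M_n(\A)$.

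Next, I would pair with $[-i\partial_t]$. By \cref{prop:SF_Kasp_prod}, applied in its unbounded-family form as in the proof of \cref{lem:S_H}, the result is the spectral flow of the family over $\R$. The family must be invertible outside a compact set, and here the choice of $\kappa$ enters. For $t\ge1$ the operator equals $L^\ev_\kappa(H,\D)$, which is invertible by \cref{lem:even_spec_loc_inv}. For $t\le-1$ it equals $\kappa\D-\Gamma = L^\ev_\kappa(-1,\D)$, which is invertible since $(\kappa\D-\Gamma)^2=\kappa^2\D^2+1\ge1$, because $\D$ and $\Gamma$ anticommute. So the family is constant and invertible near $\pm\infty$, and the pairing equals $\SF(\{\kappa\D+\Gamma S_H(t)\}_{t\in[-1,1]})$.

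It remains to replace this path by the straight line. Both paths have the same invertible endpoints, and the space of bounded perturbations of $\kappa\D$ with fixed endpoints is convex. Homotopy invariance of spectral flow with fixed endpoints then gives $\SF(\kappa\D-\Gamma\sfarrow\kappa\D+\Gamma H)$.

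For the final statement, assume $\D$ is invertible. Then $\kappa\D-s\Gamma$ is invertible for all $s\in[0,1]$, since its square is $\kappa^2\D^2+s^2\ge\kappa^2\|\D^{-1}\|^{-2}>0$. The path from $\kappa\D$ to $\kappa\D-\Gamma$ therefore has zero spectral flow. Concatenation and homotopy invariance then give $\SF(\kappa\D\sfarrow L^\ev_\kappa(H,\D))$.

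The main obstacle is the identification of the Kasparov product $[S_H]\otimes_A[\D]$ with the unbounded family $\kappa\D+\Gamma S_H$. The delicate parts are handling the bounded (not unbounded) representative $S_H$ of the $\K_1$-class, and checking that local compactness of the resolvents holds over $C_0(\R)$ even though $S_H$ is not compact. I would first try to avoid the full Kucerovsky machinery. One route is to use the fact that $S_H(t)$ is invertible for $|t|\ge1$. Another is to reduce to the standard formula for the product of a Fredholm operator class with a spectral triple via the bounded transform, which I would take from \cref{app:Kasp_prod}.
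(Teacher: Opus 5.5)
Your proposal is correct and follows the paper's proof essentially step by step: \cref{lem:S_H} plus associativity and commutativity to move $[-i\partial_t]$ outside, the identification $[S_H]\otimes_A[\D]=[\kappa\D+\Gamma S_H(\cdot)]$ from \cref{coro:10_Kasp_prod_S_D}, \cref{prop:SF_Kasp_prod} to obtain a spectral flow, endpoint-dependence to pass to the straight line, and (for invertible $\D$) invertibility of $\kappa\D-s\Gamma$ to drop the segment from $\kappa\D-\Gamma$ to $\kappa\D$. One small correction to your sketch of the product: since the left algebra is $\C$, what is needed is that $\mL^\ev_\kappa$ be Fredholm on $C_0(\R,\mH^{\oplus n})$, not merely that it have locally compact resolvents. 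The paper obtains this from a parametrix built using invertibility of $\mL^\ev_\kappa(\pm1)$, and replaces $S_H$ by $\varphi S_H$ (which has compact resolvents) in order to apply Lesch--Mesland; this is exactly the route via invertibility of $S_H(t)$ for $|t|\geq1$ that you suggest.
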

\begin{proof}
Using \cref{lem:S_H} along with the properties of the Kasparov product, we can rewrite 
\begin{align*}
[p] \otimes_A [\D] 
&= \big( [S_H] \otimes_{C_0(\R)} [-i\partial_t] \big) \otimes_A [\D] 
= [S_H] \otimes_{C_0(\R,A)} \big( [-i\partial_t] \otimes [\D] \big) \\
&= [S_H] \otimes_{C_0(\R,A)} \big( [\D] \otimes [-i\partial_t] \big) 
= \big( [S_H] \otimes_A [\D] \big) \otimes_{C_0(\R)} [-i\partial_t] .
\end{align*}
By \cref{coro:10_Kasp_prod_S_D}, the Kasparov product $[S_H] \otimes_A [\D]$ is given by $[\mL^\ev_\kappa] \in \K_1\big( C_0(\R) \big)$, where the operator $\mL^\ev_\kappa$ on the Hilbert $C_0(\R)$-module $C_0(\R,\mH^{\oplus n})$ is given by 
\[
\mL^\ev_\kappa(t) := \kappa \D + \Gamma S_H(t) .
\]
Noting that the isomorphism $\otimes_{C_0(\R)} [-i\partial_t] \colon \K_1\big( C_0(\R) \big) \xrightarrow{\simeq} \K_0(\C) \simeq \Z$ is given by the spectral flow (see \cref{prop:SF_Kasp_prod}), we have 
\[
\big( [S_H] \otimes_A [\D] \big) \otimes_{C_0(\R)} [-i\partial_t]
= [\mL^\ev_\kappa] \otimes_{C_0(\R)} [-i\partial_t] 
= \SF\big( \{\mL^\ev_\kappa\}_{t\in[-1,1]} \big) .
\]
Since $\D$ has compact resolvents and $S_H$ is bounded, $\mL^\ev_\kappa(t)$ is a relatively $\D$-compact perturbation of $\kappa\D$ (for each $t\in[-1,1]$). By \cref{prop:sf_rel_cpt_family}, the spectral flow depends only on the endpoints, and we obtain the first statement:
\[
[p] \otimes_A [\D] 
= \SF\big( \mL^\ev_\kappa(-1) \sfarrow \mL^\ev_\kappa(1) \big) 
= \SF\big( \kappa\D - \Gamma \sfarrow \kappa\D + \Gamma H \big) .
\]
Assuming $\D$ is invertible, the middle point $\mL^\ev_\kappa(0) = \kappa\D$ is also invertible. 
For $t\in[-1,0]$, we compute $\mL^\ev_\kappa(t)^2 = \big( \kappa\D - \chi_-(t) \Gamma \big)^2 = \kappa^2 \D^2 + \chi_-(t)^2 \geq \kappa^2 \D^2$, and we see that $\mL^\ev_\kappa(t)$ is invertible for all $t\in[-1,0]$. 
Hence $\SF\big( \{\mL^\ev_\kappa(t)\}_{t\in[-1,0]} \big) = 0$, and we obtain the second statement: 
\[
[p] \otimes_A [\D] 
= \SF\big( \kappa\D - \Gamma \sfarrow \kappa\D \big) + \SF\big( \kappa\D \sfarrow \kappa\D + \Gamma H \big) 
= \SF\big( \kappa\D \sfarrow \kappa\D + \Gamma H \big) .
\qedhere
\]
\end{proof}

\subsection{The odd index pairing}
\label{sec:odd_index_pairing}

\begin{setting}
\label{setting:odd}
Consider a unital $C^*$-algebra $A$. 
Let $\A \subset A$ be a dense unital $*$-subalgebra, and consider an odd spectral triple $(\A,\mH,\D)$ over $A$ representing a \K-homology class $[\D] \in \KK^1(A,\C)$. 
To be precise, this means that we have a $*$-representation $\pi\colon A\to\B(\mH)$ and a (densely defined) self-adjoint operator $\D\colon\Dom(\D)\subset\mH\to\mH$ with compact resolvents, such that for all $a\in\A$ we have $\pi(a)\cdot\Dom(\D)\subset\Dom(\D)$ and $[\D,\pi(a)]$ extends to a bounded operator on $\mH$. 
We assume that the representation $\pi$ is \emph{nondegenerate}, i.e., $\pi(A)\cdot\mH$ is dense in $\mH$. 

Furthermore, consider a unitary $u\in M_n(\A)$ for some $1\leq n\in\N$, representing a class $[u] \in \K_1(A)$. 
We fix an invertible element $G \in M_n(\A)$ whose \emph{phase} equals $u$:
\[
G |G|^{-1} = u 
\]
(for instance, we could simply choose $G = u$). 
We denote the \emph{spectral gap} of $G$ by $g := \|G^{-1}\|^{-1}$. 
\end{setting}

We obtain a unitary $\pi(u)$ and an invertible $\pi(G)$ in $\B(\mH^{\oplus n})$ by extending the representation $\pi$ to $M_n(A) \to \B(\mH^{\oplus n})$. 
It is well-known that the odd index pairing of the \K-theory class $[u]$ with the \K-homology class $[\D]$ can be computed as a Fredholm index as follows: 
\[
[u] \otimes_A [\D] = \Index\big( P_{>0}(\D) \pi(u) P_{>0}(\D) \big) . 
\]
Using \cref{coro:sf_index_PuP}, we may also express the index pairing as a spectral flow:
\[
[u] \otimes_A [\D] 
= - \SF\big( \D \sfarrow \pi(u^*) \D \pi(u) \big) 
= \SF\big( \D \sfarrow \pi(u) \D \pi(u^*) \big) . 
\]

\subsubsection{The odd spectral localiser}
\label{sec:odd_spec_loc}

Our aim here is to provide a different spectral flow expression for the odd index pairing in terms of the spectral localiser. 
We will not assume that $\D$ is invertible. 

\begin{defn}
\label{defn:odd_spec_loc}
Let $G$ and $\D$ be as in \cref{setting:odd}. 
The \emph{odd spectral localiser} of the pair $(G,\D)$ with tuning parameter $\kappa\in(0,\infty)$ is defined as the operator 
\[
L^\od_\kappa \equiv L^\od_\kappa(G,\D) 
:= \mattwo{\kappa\D}{\pi(G)}{\pi(G^*)}{-\kappa\D} 
\quad \text{on } (\Dom\D)^{\oplus n} \oplus (\Dom\D)^{\oplus n} \subset \mH^{\oplus n} \oplus \mH^{\oplus n} .
\]
When no confusion arises, we will often suppress $\pi$ from our notation. 
\end{defn}

\begin{lem}
\label{lem:odd_spec_loc_inv}
Assume $\kappa < g^2 \big\|[\D,G]\big\|^{-1}$. 
Then the odd spectral localiser $L^\od_\kappa$ is invertible. 
\end{lem}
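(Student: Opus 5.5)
We follow the proof of \cref{lem:even_spec_loc_inv} and bound the square of $L^\od_\kappa$ from below. Write $L^\od_\kappa = \kappa\widetilde\D + \widetilde G$ on $(\Dom\D)^{\oplus n}\oplus(\Dom\D)^{\oplus n}$, where
\[
\widetilde\D := \mattwo{\D}{0}{0}{-\D} , \qquad \widetilde G := \mattwo{0}{G}{G^*}{0} .
\]
Both operators are self-adjoint, and $\widetilde G$ is bounded. Since $\widetilde G$ preserves the domain (as $G\in M_n(\A)$), we may expand
\[
\big(L^\od_\kappa\big)^2 = \kappa^2\widetilde\D^2 + \widetilde G^2 + \kappa\big(\widetilde\D\widetilde G + \widetilde G\widetilde\D\big) .
\]

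We then compute the three terms. First, $\widetilde\D^2 = \D^2\oplus\D^2 \geq 0$. Second, $\widetilde G^2 = GG^*\oplus G^*G \geq g^2$, because $G$ is invertible with $\|G^{-1}\|^{-1}=g$, so both $G^*G\geq g^2$ and $GG^*\geq g^2$. Third, the anticommutator is off-diagonal:
\[
\widetilde\D\widetilde G + \widetilde G\widetilde\D = \mattwo{0}{\D G - G\D}{-\D G^* + G^*\D}{0} = \mattwo{0}{[\D,G]}{[\D,G]^*}{0} ,
\]
using $[\D,G^*] = -[\D,G]^*$, which holds on $\Dom\D$ since $\D$ is self-adjoint. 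This is a bounded self-adjoint operator whose norm equals $\|[\D,G]\|$.

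Combining the three terms gives
\[
\big(L^\od_\kappa\big)^2 \geq g^2 - \kappa\|[\D,G]\| > 0
\]
on the domain of $(L^\od_\kappa)^2$. Since $L^\od_\kappa$ is self-adjoint (a bounded self-adjoint perturbation of $\kappa\widetilde\D$), this lower bound implies that $L^\od_\kappa$ is invertible.

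The only delicate step is the anticommutator computation: one has to check the sign conventions so that the off-diagonal entries really are the commutator $[\D,G]$ and its adjoint. The domain issues are handled as in \cref{lem:even_spec_loc_inv}.
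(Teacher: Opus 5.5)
Your proof is correct and is essentially the paper's argument. The paper likewise expands $(L^\od_\kappa)^2$ into the block matrix with diagonal entries $\kappa^2\D^2+GG^*$, $\kappa^2\D^2+G^*G$ and off-diagonal entries $\kappa[\D,G]$, $-\kappa[\D,G^*]$, and bounds it below by $g^2-\kappa\|[\D,G]\|>0$; your splitting into $\kappa(\D\oplus(-\D))$ plus the off-diagonal $G$-part is just a bookkeeping of the same computation.
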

\begin{proof}
As in the proof of \cref{lem:even_spec_loc_inv}, the square can be estimated by 
\begin{align*}
\big(L^\od_\kappa\big)^2 
&= \mattwo{\kappa^2 \D^2 + GG^*}{\kappa[\D,G]}{-\kappa[\D,G^*]}{\kappa^2 \D^2 + G^*G} 
\geq g^2 - \kappa \big\| [\D,G] \big\| 
> 0 .
\qedhere
\end{align*}
\end{proof}

\subsubsection{The spectral flow expression}
\label{sec:odd_index_pairing_spec_loc}

We closely follow the discussion of the even case given in \cref{sec:even_index_pairing}, and we will therefore be brief regarding some of the details. 
Our starting point is now an alternative representation of the odd \K-theory class $[u] \in \K_1(A)$ based on the Bott periodicity isomorphism $\K_1(A) \simeq \K_0\big( C_0(\R)\otimes A \big)$. 
Recall the continuous functions $\chi_\pm \colon \R \to [0,1]$ with the properties given in \cref{eq:chi}. 
The following construction of the operator $S_G$ can be found in the proof of \cite[Lemma B.3]{vdD25_DS_ind_sf_25b} in the special case $G=u$ (the idea of the construction goes back to \cite[Appendix A.1]{Wah09}). 

\begin{lem}
\label{lem:S_G}
Let $\chi_\pm \colon \R \to [0,1]$ be continuous functions satisfying the properties in \cref{eq:chi}. 
Consider the (bounded) self-adjoint Fredholm operator 
\[
S_G := \mattwo{0}{\chi_- + \chi_+ G}{\chi_- + \chi_+ G^*}{0} 
\quad \text{on} \quad C_0(\R,A)^{\oplus n} \oplus C_0(\R,A)^{\oplus n} ,
\]
representing a class $[S_G] \in \K_0\big(C_0(\R,A)\big)$. 
Then we have the equality 
\[
[u] = - [S_G] \otimes_{C_0(\R)} [-i\partial_t] \in \K_1(A) .
\]
Thus the map $[u] \mapsto -[S_G]$ implements the natural isomorphism $\K_1(A) \xrightarrow{\simeq} \K_0\big(C_0(\R,A)\big)$. 
\end{lem}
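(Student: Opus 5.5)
We follow the proof of \cref{lem:S_H}, now in the odd (ungraded-to-graded) setting. The operator $S_G$ is odd with respect to the grading of $C_0(\R,A)^{\oplus n}\oplus C_0(\R,A)^{\oplus n}$, so it defines a class in $\K_0\big(C_0(\R,A)\big)$. First we check that $S_G$ is Fredholm on the Hilbert $C_0(\R,A)$-module. Outside $[-1,1]$ we have $S_G(t)$ equal to $\mattwo{0}{1}{1}{0}$ for $t\le-1$ and to $\mattwo{0}{G}{G^*}{0}$ for $t\geq1$. Both are invertible, so $S_G$ is invertible modulo $C_0(\R)\otimes \mathcal{K}$, i.e. modulo compacts on the module.

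We then apply \cref{prop:SF_Kasp_prod}, in its even/odd form. It identifies $[S_G]\otimes_{C_0(\R)}[-i\partial_t]$ with the odd spectral flow of the family $\{S_G(t)\}_{t\in[-1,1]}$ of odd self-adjoint Fredholm operators on $A^{\oplus n}\oplus A^{\oplus n}$. This odd spectral flow takes values in $\K_1(A)$. Since the family is a norm-continuous family of bounded operators, hence trivially a relatively compact family, \cref{prop:sf_rel_cpt_family} shows that this odd spectral flow depends only on the endpoints. It equals the odd relative index
\[
\relind\big( S_G(1), S_G(-1)\big) = \relind\Big( \mattwo{0}{G}{G^*}{0} , \mattwo{0}{1}{1}{0} \Big) ,
\]
with the appropriate ordering convention from \cref{app:Fredholm}.

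The remaining step, and the main obstacle, is to identify this odd relative index with $-[u]\in\K_1(A)$. That means tracking the sign and orientation conventions of the odd relative index for invertible odd operators carefully. We plan to argue as follows.
\begin{itemize}
\item For invertible odd self-adjoint operators $\mattwo{0}{T^*}{T}{0}$ the odd relative index should reduce to the class of the ratio of the off-diagonal parts, essentially $[\,\text{phase}(T_1)^{-1}\text{phase}(T_0)\,]$ or its inverse.
\item The entry of $S_G(1)$ in the lower-left position is $G^*$, whereas the reference $S_G(-1)$ has entry $1$. This produces the class $[G^*|G^*|^{-1}] = [u^*] = -[u]$.
\item Before applying the formula, we may homotope $G$ to $u$ through invertibles, e.g. via $G|G|^{-s}$ with $s\in[0,1]$, without changing classes. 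This reduces the computation to the unitary case.
\end{itemize}
If the conventions of \cref{app:Fredholm} place $G$ rather than $G^*$ in the relevant corner, we instead obtain $+[u]$ with a compensating sign in $\SF$. Either way, we aim to match the stated minus sign by checking against the case $A=\C$, $n=1$, $G=z$ on a simple example.
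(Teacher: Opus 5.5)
Your route is the paper's: \cref{prop:sf_rel_cpt_family,prop:SF_Kasp_prod} reduce $[S_G]\otimes_{C_0(\R)}[-i\partial_t]$ to $\relind_1\big(P_{>0}(S_G(1)),P_{>0}(S_G(-1))\big)$, and your second bullet gives the correct value $[u^*]=-[u]$. (The relative compactness holds because $A$ is unital, so every adjointable operator on $A^{\oplus 2n}$ is $A$-compact; boundedness of the perturbations alone would not suffice.)

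The hedging over conventions is unnecessary. \cref{defn:rel-ind} fixes the convention: $U_P$ is the lower-left entry of $2P-1$, and $\relind_1(P,Q)=[U_PU_Q^*]$. Since $P_{>0}\big(\mattwo{0}{G}{G^*}{0}\big)=\frac12\mattwo{1}{u}{u^*}{1}$, you get $U_P=u^*$ and $U_Q=1$ directly. So you need neither the homotopy of $G$ to $u$ nor a test example, and the case $A=\C$ would be vacuous anyway, since $\K_1(\C)=0$.
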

\begin{proof}
We first compute 
\begin{align*}
P_{>0}\left( \mattwo{0}{G}{G^*}{0} \right) 
&= \frac12 \left( \mattwo{1}{0}{0}{1} + \mattwo{0}{G}{G^*}{0} \left| \mattwo{0}{G}{G^*}{0} \right|^{-1} \right) \nonumber\\
&= \frac12 \mattwo{1}{G|G|^{-1}}{G^*|G^*|^{-1}}{1} 
= \frac12 \mattwo{1}{u}{u^*}{1} .
\end{align*}
From \cref{prop:sf_rel_cpt_family,prop:SF_Kasp_prod} we then obtain 
\begin{align*}
[S_G] \otimes_{C_0(\R)} [-i\partial_t] 
&= \SF_1\big( \{S_G(t)\}_{t\in[-1,1]} \big) 
= \relind_1 \big( P_{>0}(S_G(1)) , P_{>0}(S_G(-1)) \big) \\
&= \relind_1 \left( \frac12 \mattwo{1}{u}{u^*}{1} , \frac12 \mattwo{1}{1}{1}{1} \right) 
= [u^*] 
= -[u] .
\qedhere
\end{align*}
\end{proof}

\begin{thm}
\label{thm:odd_index_pairing_spec_loc}
Assume $\kappa < g^2 \big\|[\D,G]\big\|^{-1}$. 
The pairing of $[u] \in \K_1(A)$ with $[\D] \in \KK^1(A,\C)$ is given by 
\begin{align*}
[u] \otimes_A [\D] 
&= \SF\big( L^\od_\kappa(1,\D) \sfarrow L^\od_\kappa(G,\D) \big) 
= \SF\left( \mattwo{\kappa\D}{1}{1}{-\kappa\D} \sfarrow \mattwo{\kappa\D}{G}{G^*}{-\kappa\D} \right) \in \Z .
\end{align*}
Moreover, if $\D$ is invertible, we obtain 
\[
[u] \otimes_A [\D] = \SF\left( \mattwo{\kappa\D}{0}{0}{-\kappa\D} \sfarrow \mattwo{\kappa\D}{G}{G^*}{-\kappa\D} \right) .
\]
\end{thm}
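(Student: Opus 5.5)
We follow the proof of \cref{thm:even_index_pairing_spec_loc}, replacing \cref{lem:S_H} by \cref{lem:S_G}. By \cref{lem:S_G} and associativity and commutativity of the exterior Kasparov product,
\begin{align*}
[u] \otimes_A [\D]
&= -\big( [S_G] \otimes_{C_0(\R)} [-i\partial_t] \big) \otimes_A [\D]
= -[S_G] \otimes_{C_0(\R,A)} \big( [-i\partial_t] \otimes [\D] \big) \\
&= -[S_G] \otimes_{C_0(\R,A)} \big( [\D] \otimes [-i\partial_t] \big)
= -\big( [S_G] \otimes_A [\D] \big) \otimes_{C_0(\R)} [-i\partial_t] .
\end{align*}
Both $[S_G]$ and $[-i\partial_t]$ are odd in the relevant sense, so swapping $[-i\partial_t]\otimes[\D]$ to $[\D]\otimes[-i\partial_t]$ may introduce a sign. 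I will track this sign carefully against the orientation convention used in \cref{app:Kasp_prod}.

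The next step is to identify $[S_G]\otimes_A[\D]\in\K_1(C_0(\R))$. I would use the odd analogue of \cref{coro:10_Kasp_prod_S_D} from \cref{app:Kasp_prod}. It should represent the product by the operator
\[
\mL^\od_\kappa(t) := \mattwo{\kappa\D}{\chi_-(t)+\chi_+(t)G}{\chi_-(t)+\chi_+(t)G^*}{-\kappa\D}
\]
on $C_0(\R,\mH^{\oplus n}\oplus\mH^{\oplus n})$. Heuristically, the product of the graded operator $S_G$ (grading $1\oplus-1$) with the ungraded $\D$ is $\kappa\D\otimes\Gamma' + S_G\otimes1$ with $\Gamma'=\mathrm{diag}(1,-1)$. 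The hypothesis $\kappa<g^2\|[\D,G]\|^{-1}$ ensures, as in \cref{lem:odd_spec_loc_inv}, that this operator is invertible at $t=\pm1$. In the appendix this is exactly the condition needed for the Kasparov-product (connection/positivity) criteria. It plays the same role as in the even case, since $\chi_-+\chi_+G$ satisfies $[\D,\cdot]$ bounds uniformly in $t$.

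Applying \cref{prop:SF_Kasp_prod} converts $\otimes_{C_0(\R)}[-i\partial_t]$ into the spectral flow of $\{\mL^\od_\kappa(t)\}_{t\in[-1,1]}$. Each $\mL^\od_\kappa(t)$ is a bounded, hence relatively compact, perturbation of $\kappa\D\oplus(-\kappa\D)$, since $\D$ has compact resolvents. \cref{prop:sf_rel_cpt_family} then reduces the flow to the straight line between the endpoints $\mL^\od_\kappa(-1)=L^\od_\kappa(1,\D)$ and $\mL^\od_\kappa(1)=L^\od_\kappa(G,\D)$. The minus sign from \cref{lem:S_G} must cancel against the sign from the graded commutation above, or against an orientation in the appendix's product formula, to yield the stated equality without a sign. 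This sign bookkeeping is where I expect the main obstacle. As a sanity check, I would test the case $G=u$ against the known formula $[u]\otimes_A[\D]=\SF(\D\sfarrow u\D u^*)$ for a simple example, such as the circle with $u=e^{i\theta}$.

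For the second statement, assume $\D$ is invertible. At $t=0$ the operator $\mL^\od_\kappa(0)=\mathrm{diag}(\kappa\D,-\kappa\D)$ is invertible. For $t\in[-1,0]$ the off-diagonal entries are the scalar $\chi_-(t)$, which commutes with $\D$. Hence
\[
\mL^\od_\kappa(t)^2=\mathrm{diag}\big(\kappa^2\D^2+\chi_-(t)^2,\ \kappa^2\D^2+\chi_-(t)^2\big)\geq\kappa^2\D^2>0,
\]
so the spectral flow over $[-1,0]$ vanishes. Concatenating the paths and using \cref{prop:sf_rel_cpt_family} again gives the spectral flow from $\mathrm{diag}(\kappa\D,-\kappa\D)$ to $L^\od_\kappa(G,\D)$.
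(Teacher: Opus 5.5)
Your route is the paper's route: \cref{lem:S_G}, then associativity and commutativity of the exterior product, \cref{coro:01_Kasp_prod_S_D}, \cref{prop:SF_Kasp_prod}, \cref{prop:sf_rel_cpt_family}, and the same invertibility argument on $[-1,0]$ for the second statement. As written, however, there is a genuine gap at the sign, and the sign is the only thing that distinguishes the theorem from its negative.

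\textbf{The sign error.} Your third equality, $-[S_G]\otimes_{C_0(\R,A)}\big([-i\partial_t]\otimes[\D]\big) = -[S_G]\otimes_{C_0(\R,A)}\big([\D]\otimes[-i\partial_t]\big)$, is false. The chain you display therefore proves $[u]\otimes_A[\D] = -\SF\big(L^\od_\kappa(1,\D)\sfarrow L^\od_\kappa(G,\D)\big)$. You then leave open whether the minus sign from \cref{lem:S_G} is cancelled by the graded commutation or by some orientation in the appendix. This amounts to assuming the conclusion.

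\textbf{Which classes are odd.} Your parity diagnosis is also off. $[S_G]\in\K_0(C_0(\R,A))=\KK^0(\C,C_0(\R,A))$ is even, and it plays no role in the swap. The classes being swapped are $[-i\partial_t]\in\KK^1(C_0(\R),\C)$ and $[\D]\in\KK^1(A,\C)$, and both are odd.

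\textbf{The missing step.} The exterior Kasparov product is graded commutative: $x\otimes_\C y=(-1)^{ij}\,y\otimes_\C x$ for $x$ of degree $i$ and $y$ of degree $j$, up to the flip isomorphism $C_0(\R)\otimes A\cong A\otimes C_0(\R)$. Hence $[-i\partial_t]\otimes[\D]=-[\D]\otimes[-i\partial_t]$. This cancels the minus sign in $[u]=-[S_G]\otimes_{C_0(\R)}[-i\partial_t]$ and gives $[u]\otimes_A[\D]=\big([S_G]\otimes_A[\D]\big)\otimes_{C_0(\R)}[-i\partial_t]$ with no sign. This is also why no sign appears in the even case, where $[\D]$ is even. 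A numerical check on the circle cannot replace this step.

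\textbf{A smaller inaccuracy.} The bound on $\kappa$ is not what makes the Lesch--Mesland criteria hold in \cref{app:Kasp_prod}. Those criteria hold for every $\kappa>0$, applied to $\varphi S$ and $\kappa\Gamma_S\D$. The bound is used to make $\mL^\od_\kappa(\pm1)$ invertible. That makes the operator built from the bounded $S_G$ Fredholm over $C_0(\R)$, and homotopic to the unbounded representative of the product.

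The rest of your argument matches the paper, including the identification of the endpoints $\mL^\od_\kappa(-1)=L^\od_\kappa(1,\D)$ and $\mL^\od_\kappa(1)=L^\od_\kappa(G,\D)$, and the proof of the second statement.
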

\begin{proof}
As in the proof of \cref{thm:even_index_pairing_spec_loc}, we can now use \cref{lem:S_G} along with the properties of the Kasparov product to rewrite 
\begin{align*}
[u] \otimes_A [\D] 
&= - \big( [S_G] \otimes_{C_0(\R)} [-i\partial_t] \big) \otimes_A [\D] 
= - [S_G] \otimes_{C_0(\R,A)} \big( [-i\partial_t] \otimes [\D] \big) \\
&= [S_G] \otimes_{C_0(\R,A)} \big( [\D] \otimes [-i\partial_t] \big) 
= \big( [S_G] \otimes_A [\D] \big) \otimes_{C_0(\R)} [-i\partial_t] . 
\end{align*}
where in the third step we used that the exterior Kasparov product of \emph{odd} Kasparov modules is \emph{anticommutative}. 
By \cref{coro:01_Kasp_prod_S_D}, the Kasparov product $[S_G] \otimes_A [\D]$ is given by $[\mL^\od_\kappa] \in \K_1\big( C_0(\R) \big)$, where the operator $\mL^\od_\kappa$ on the Hilbert $C_0(\R)$-module $C_0(\R,\mH^{\oplus2n})$ is given by 
\(
\mL^\od_\kappa(t) := L^\od(S_G(t),\D) .
\)
Combined with \cref{prop:SF_Kasp_prod}, we obtain 
\[
\big( [S_G] \otimes_A [\D] \big) \otimes_{C_0(\R)} [-i\partial_t]
= [\mL^\od_\kappa] \otimes_{C_0(\R)} [-i\partial_t] 
= \SF\big( \{\mL^\od_\kappa\}_{t\in[-1,1]} \big) .
\]
Since $\mL^\od_\kappa(t)$ is a relatively $\D$-compact perturbation of $(\kappa\D)\oplus(-\kappa\D)$ (for each $t\in[-1,1]$), the spectral flow depends only on the endpoints (see \cref{prop:sf_rel_cpt_family}), and we obtain the first statement:
\[
[u] \otimes_A [\D] 
= \SF\big( L^\od_\kappa(1,\D) \sfarrow L^\od_\kappa(G,\D) \big) .
\]
Assuming $\D$ is invertible, the middle point $\mL^\od_\kappa(0) = (\kappa\D)\oplus(-\kappa\D)$ is also invertible. 
For $t\in[-1,0]$, we compute $\mL^\od_\kappa(t)^2 = \kappa^2 \D^2 + \chi_-(t)^2 \geq \kappa^2 \D^2$, and we see that $\mL^\od_\kappa(t)$ is invertible for all $t\in[-1,0]$. 
Hence $\SF\big( \{\mL^\od_\kappa(t)\}_{t\in[-1,0]} \big) = 0$, and we obtain the second statement: 
\begin{align*}
[u] \otimes_A [\D] 
&= \SF\big( (\kappa\D)\oplus(-\kappa\D) \sfarrow L^\od_\kappa(G,\D) \big) .
\qedhere
\end{align*}
\end{proof}

\begin{remark}
The spectral flow expression of \cref{thm:odd_index_pairing_spec_loc} for invertible $\D$ already appeared in \cite[\S3]{LS19}, 
but it was noted by Loring and Schulz-Baldes in their proof that ``one should, strictly speaking, always replace $\D$ by $F_\rho(\D)$'' (where $F_\rho(\D)$ is a \emph{bounded} operator obtained from $\D$ via continuous functional calculus). 
The proof given here works directly for unbounded operators and does not involve any intermediate homotopies. 
\end{remark}

\section{Spectral truncation of the index pairing}
\label{sec:spectral_truncation}

We consider an (even or odd) spectral triple $(\A,\mH,\D)$. 
For any \emph{spectral radius} $\rho \in (0,\infty)$ we consider the spectral projection $P_{[-\rho,\rho]}(\D)$. 
We introduce the short notation $\mH_\rho := \Ran P_{[-\rho,\rho]}(\D)$ and $\mH_{\rho^c} := (\mH_\rho)^\perp$. 
We denote by $\pi_\rho \colon \mH \to \mH_\rho$ the surjective partial isometry with $\Ker \pi_\rho = \mH_{\rho^c}$, and note that $\pi_\rho^* \colon \mH_\rho \into \mH$ is the natural inclusion. For an operator $T$ on $\mH$, we denote by $T_\rho := \pi_\rho T \pi_\rho^*$ its compression on $\mH_\rho$ (and similarly for $T_{\rho^c}$). 

\subsection{The even index pairing}
\label{sec:spectral_truncation_even}

We consider again an even spectral triple $(\A,\mH,\D)$ over $A$ and a self-adjoint invertible $H \in M_n(\A)$ with spectral gap $g := \|H^{-1}\|^{-1}$ whose positive spectral projection equals $p$, as in \cref{setting:even}. 
Recall the definition of the \emph{even spectral localiser} $L^\ev_\kappa$ of the pair $(H,\D)$ with tuning parameter $\kappa\in(0,\infty)$ from \cref{defn:even_spec_loc}. 

\begin{defn}
\label{defn:even_spec_loc_finite}
The even \emph{truncated} spectral localiser of the pair $(H,\D)$ with tuning parameter $\kappa$ and spectral radius $\rho\in(0,\infty)$ is defined as the operator 
\[
L^\ev_{\kappa,\rho} 
\equiv L^\ev_{\kappa,\rho}(H,\D) 
:= \pi_\rho L^\ev_\kappa(H,\D) \pi_\rho^* 
= \kappa \D_\rho + \Gamma_\rho H_\rho .
\]
\end{defn}

\begin{assumption}
\label{ass:kappa_rho}
We assume that the tuning parameter $\kappa\in(0,\infty)$ and the spectral radius $\rho\in(0,\infty)$ satisfy the following conditions:
\begin{align*}
\kappa &\leq \frac{g^3}{12 \|H\| \|[\D,H]\|} , & 
\frac{2g}{\kappa} &< \rho . 
\end{align*}
\end{assumption}

\begin{thm}[cf.\ {\cite[Theorem 10.3.1]{DSW2023}}]
\label{thm:even_spec_loc_rho_inv}
The even truncated spectral localiser $L^\ev_{\kappa,\rho}$ is invertible with spectral gap $\frac12 g$. 
Furthermore, the signature of $L^\ev_{\kappa,\rho}$ is independent of the choices of $\kappa$ and $\rho$ (satisfying \cref{ass:kappa_rho}). 
\end{thm}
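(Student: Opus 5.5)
The plan is to bound the square of the truncated localiser from below by working on $\mH_\rho^{\oplus n}$. We split vectors into a low-frequency part, compared with the untruncated localiser of \cref{lem:even_spec_loc_inv}, and a high-frequency part, where $\kappa^2\D^2$ dominates. Throughout we use that $P_{[-\rho,\rho]}(\D)$ is a function of $\D^2$, so it commutes with $\D$ and with $\Gamma$. Hence $\D_\rho$ and $\Gamma_\rho$ are the honest restrictions, $\Gamma_\rho$ still anticommutes with $\D_\rho$, and for the anticommutator we get
\[
\kappa\{\D_\rho,\Gamma_\rho H_\rho\} = \kappa\,\pi_\rho\,\Gamma[H,\D]\,\pi_\rho^* ,
\]
which has norm at most $\kappa\|[\D,H]\|$. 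The only defect compared with \cref{lem:even_spec_loc_inv} is that $(H_\rho)^2 = (H^2)_\rho - \pi_\rho H P_{\rho^c} H \pi_\rho^*$, and the second term is not small a priori.

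The key steps, in order, are as follows.

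First, choose smooth functions $f,h\colon\R\to[0,1]$ with $f^2+h^2=1$, $f=1$ on $[-\rho/4,\rho/4]$ and $\supp f\subset[-\rho/2,\rho/2]$. By a standard commutator estimate (Fourier representation or Helffer--Sjöstrand), $\|[f(\D),H]\|,\|[h(\D),H]\|\leq C\rho^{-1}\|[\D,H]\|$. Since $2g/\kappa<\rho$, this is at most $C'\kappa g^{-1}\|[\D,H]\|$.

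Second, use the IMS-type localisation identity. For $\psi\in\mH_\rho^{\oplus n}$,
\[
\|L^\ev_{\kappa,\rho}\psi\|^2 = \|L^\ev_{\kappa,\rho}f(\D)\psi\|^2 + \|L^\ev_{\kappa,\rho}h(\D)\psi\|^2 - \big(\text{terms in } [L^\ev_{\kappa,\rho},f(\D)],\,[L^\ev_{\kappa,\rho},h(\D)]\big).
\]
The error terms involve only $\Gamma[H,f(\D)]$ and $\Gamma[H,h(\D)]$, and are therefore controlled by the first step.

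Third, treat the low part. Because $f(\D)\psi$ lies spectrally in $[-\rho/2,\rho/2]$, we have
\[
\|P_{\rho^c}Hf(\D)\psi\| = \|P_{\rho^c}[H,f(\D)]\psi\|,
\]
which is small. Hence $L^\ev_{\kappa,\rho}f(\D)\psi$ is close to $L^\ev_\kappa f(\D)\psi$. By the proof of \cref{lem:even_spec_loc_inv}, the latter has norm squared at least $(g^2-\kappa\|[\D,H]\|)\|f(\D)\psi\|^2$.

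Fourth, treat the high part, which is spectrally supported in $|\D|\geq\rho/4$. Expanding the square gives $\|L^\ev_{\kappa,\rho}h\psi\|^2\geq \kappa^2\|\D h\psi\|^2-\kappa\|[\D,H]\|\,\|h\psi\|^2$, where we drop the positive term $\|H_\rho h\psi\|^2$. The first term is at least $\kappa^2\rho^2/16\cdot\|h\psi\|^2\geq g^2/4\cdot\|h\psi\|^2$.

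Combining, the constant $\kappa\leq g^3/(12\|H\|\|[\D,H]\|)$ is used to absorb all error terms. The factor $\|H\|$ enters through cross terms of the form $\|H\|\cdot\|[H,f(\D)]\|$. The outcome is the lower bound $(L^\ev_{\kappa,\rho})^2\geq g^2/4$.

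The hardest step will be this bookkeeping of constants: getting exactly gap $\tfrac12 g$ from the stated assumption. If the IMS route gives slightly worse constants, I would instead try the cruder Loring--Schulz-Baldes approach. That is, replace $f$ by a piecewise-linear cutoff and estimate the cross term $\pi_\rho H P_{\rho^c}H\pi_\rho^*$ directly, via $P_{\rho^c}Hf(\D) = P_{\rho^c}[H,f(\D)]$.

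For independence of the signature, we argue in two parts. For fixed $\rho$, $\kappa\mapsto L^\ev_{\kappa,\rho}$ is a norm-continuous path of invertible matrices on a fixed finite-dimensional space whenever both endpoints satisfy \cref{ass:kappa_rho}. Note that the admissible $\kappa$ for fixed $\rho$ form an interval. Therefore the signature is constant along this path. For varying $\rho<\rho'$ at fixed $\kappa$, decompose $\mH_{\rho'}=\mH_\rho\oplus\mK$ with $\mK=\Ran P_{(\rho,\rho']}(|\D|)$, and deform by the path
\[
\kappa\D_{\rho'} + \Gamma_{\rho'}\big(\pi_\rho H\pi_\rho^* + s(\text{remaining blocks of } H_{\rho'})\big), \qquad s\in[0,1].
\]
The same square estimate, applied to the high part where $\kappa|\D|>\kappa\rho>2g$, shows invertibility for all $s$. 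At $s=0$ the operator splits as $L^\ev_{\kappa,\rho}\oplus\kappa\D_\mK$. The operator $\kappa\D_\mK$ is invertible, and its signature is zero because $\Gamma$ anticommutes with it, so its spectrum is symmetric. Hence $\Sig L^\ev_{\kappa,\rho'}=\Sig L^\ev_{\kappa,\rho}$. Connecting any two admissible pairs $(\kappa,\rho)$ by such moves gives the claim.
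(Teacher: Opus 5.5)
Your outline follows the argument the paper relies on. The paper does not reprove this statement; it imports the first part of the proof of Theorem~10.3.1 in the Doll--Schulz-Baldes--Waterstraat textbook. That is the Loring--Schulz-Baldes scheme: smooth spectral cutoffs of $\D$, a Fourier commutator estimate, IMS localisation, comparison of the low part with the untruncated localiser, and $\kappa^2\D^2$-domination of the high part.

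\textbf{The high-frequency step falls short of $g^2/4$.} The quantitative step is the real content of the statement, and it fails as you set it up. Your $h$ vanishes only on $[-\rho/4,\rho/4]$, so the hypothesis $\rho>2g/\kappa$ gives merely $\kappa^2\rho^2/16>g^2/4$. Your fourth step therefore yields $\|L^\ev_{\kappa,\rho}h\psi\|^2\geq(g^2/4-\kappa\|[\D,H]\|)\|h\psi\|^2$, which is strictly below the target $g^2/4$ before any localisation error is subtracted. Your transition region also has width only $\rho/4$, so the constant in your first step doubles compared with a transition over $[\rho/2,\rho]$. It is then not even clear that the remaining margin (at least $g^2/6$) absorbs the errors. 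The shrinking is unnecessary: your third step only uses $P_{\rho^c}f(\D)=0$, which holds once $\supp f\subset[-\rho,\rho]$. Take $f=1$ on $[-\rho/2,\rho/2]$ and you get $\kappa^2\rho^2/4>g^2$ on the high part. Your fallback of a piecewise-linear cutoff would break the first step. Functions with kinks are not operator Lipschitz, so they admit no bound $\|[f(\D),H]\|\leq\frac{c}{\rho}\|[\D,H]\|$ with $c$ independent of $\D$ and $H$.

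\textbf{The localisation errors are larger than you claim.} If you localise $(L^\ev_{\kappa,\rho})^2$ itself, the errors do not involve only $\Gamma[H,f(\D)]$ and $\Gamma[H,h(\D)]$. Write $Z:=\pi_\rho[H,\D]\pi_\rho^*$. The anticommutator part $\kappa\Gamma_\rho Z$ contributes $\frac{\kappa}{2}\Gamma_\rho\big([f,[f,Z]]+[h,[h,Z]]\big)$. This is of order $\kappa\|[\D,H]\|$, does not decay in $\rho$, and is not controlled by your first step.

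The clean bookkeeping is to use $(L^\ev_{\kappa,\rho})^2\geq Q-\kappa\|[\D,H]\|$ with $Q:=\kappa^2\D_\rho^2+H_\rho^2$. Then localise only $Q$, via $Q=fQf+hQh+\frac12\big([f,[f,H_\rho^2]]+[h,[h,H_\rho^2]]\big)$. This gives
\[
\big(L^\ev_{\kappa,\rho}\big)^2\geq g^2-\kappa\|[\D,H]\|-\|[f(\D),H]\|^2-\|H\|\big(\|[f(\D),H]\|+\|[h(\D),H]\|\big) .
\]
Now insert $\|[f(\D),H]\|,\|[h(\D),H]\|\leq\frac{c}{\rho}\|[\D,H]\|$, together with $\frac1\rho<\frac{\kappa}{2g}$, $\kappa\|H\|\|[\D,H]\|\leq\frac{g^3}{12}$ and $\|H\|\geq g$. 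The right-hand side is then at least $g^2\big(1-\frac{1+c}{12}-\frac{c^2}{576}\big)$. This is where $\|H\|$ and the constant $12$ enter. You still need a concrete pair $f,h$ with $f^2+h^2=1$ and an explicit $c$ small enough to reach $g^2/4$. That is the substance of the claim, not bookkeeping.

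\textbf{The signature argument.} Its structure is sound. However, invertibility along your $\rho\to\rho'$ homotopy needs the full low/high estimate, at scale $\rho$, rerun for the interpolated operators. Call your $\pi_\rho H\pi_\rho^*+s(\cdots)$ by the name $H^{(s)}$. The rerun works because $H^{(s)}$ has norm, and commutators with $f(\D)$ and $\D$, no larger than those of $H$, and $\|H^{(s)}\xi\|\geq\|H_\rho\xi\|$ for $\xi\in\mH_\rho$. A block argument using only $\kappa|\D|>2g$ on $\mK$ would not suffice. The coupling between $\mH_\rho$ and $\mK$ has norm up to $\|H\|$, which need not be small compared with $\sqrt{g\kappa\rho}$.
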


\begin{remark}
The invertibility of the truncated spectral localiser is well-established in the literature; in the even case, it was first proven in \cite{LS20}, and we cite here the result from the textbook \cite{DSW2023}. Although \cite[Theorem 10.3.1]{DSW2023} assumes $\D$ to be invertible, we note that the first part of its proof, which yields the above theorem, does not actually require this assumption. The invertibility of $\D$ is only used in the proof of the second statement in \cite[Theorem 10.3.1]{DSW2023}, which we will reprove in \cref{thm:even_pairing_spec_loc} below without assuming invertibility of $\D$. 
(Alternatively, see the proof of \cite[Proposition 10.1]{Kaa25pre}, which also does not require invertibility of $\D$.)
\end{remark}

\begin{lem}
\label{lem:even_spec_loc_rho_c_inv}
The operator $L^\ev_{\kappa,\rho^c} := \pi_{\rho^c} L^\ev_\kappa \pi_{\rho^c}^*$ is invertible with spectral gap $\sqrt{\frac{47}{48}}\kappa\rho$. 
\end{lem}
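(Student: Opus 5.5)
The plan is to estimate the square of $L^\ev_{\kappa,\rho^c}$ from below, as in the proof of \cref{lem:even_spec_loc_inv}. The difference is that on $\mH_{\rho^c}$ the term $\kappa^2\D^2$ is bounded below by $\kappa^2\rho^2$. This lets us discard $H^2$ and use only the crude bound on the cross term.

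\textbf{Step 1: structure of the compression.} Write $Q := 1-P_{[-\rho,\rho]}(\D)$, so that $\pi_{\rho^c}^*\pi_{\rho^c} = Q$. Since $P_{[-\rho,\rho]}(\D) = P_{[0,\rho^2]}(\D^2)$ and $\D^2$ is even, $Q$ commutes with $\Gamma$. Moreover $Q$ commutes with $\D$, so $\mH_{\rho^c}$ reduces $\D$. Hence $\D_{\rho^c}$ is self-adjoint on $\pi_{\rho^c}(\Dom\D)$ and satisfies $\D_{\rho^c}^2 \geq \rho^2$. Set $B := \pi_{\rho^c}\Gamma H\pi_{\rho^c}^*$, which is bounded and self-adjoint. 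Then
\[
L^\ev_{\kappa,\rho^c} = \kappa\D_{\rho^c} + B .
\]

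\textbf{Step 2: the square.} Because $\D$ commutes with $Q$, we have $\D_{\rho^c}B = \pi_{\rho^c}\D\Gamma H\pi_{\rho^c}^*$, and similarly for $B\D_{\rho^c}$. Therefore
\[
\{\D_{\rho^c},B\} = \pi_{\rho^c}\{\D,\Gamma H\}\pi_{\rho^c}^* = -\pi_{\rho^c}\Gamma[\D,H]\pi_{\rho^c}^* ,
\]
using $\D\Gamma = -\Gamma\D$. This operator is bounded, with norm at most $\|[\D,H]\|$. Using $B^2\geq0$, we get, in the sense of quadratic forms on the domain,
\[
\big(L^\ev_{\kappa,\rho^c}\big)^2 = \kappa^2\D_{\rho^c}^2 + B^2 + \kappa\{\D_{\rho^c},B\} \geq \kappa^2\rho^2 - \kappa\|[\D,H]\| .
\]

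\textbf{Step 3: using \cref{ass:kappa_rho}.} Since $g\leq\|H\|$, the first condition gives
\[
\kappa\|[\D,H]\| \leq \frac{g^3}{12\|H\|} \leq \frac{g^2}{12} .
\]
The second condition gives $\kappa^2\rho^2 > 4g^2$, so $\frac{g^2}{12} < \frac{\kappa^2\rho^2}{48}$. Combining these, $(L^\ev_{\kappa,\rho^c})^2 \geq \frac{47}{48}\kappa^2\rho^2$.

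\textbf{Conclusion and main obstacle.} Since $L^\ev_{\kappa,\rho^c}$ is self-adjoint, being a bounded perturbation of the self-adjoint operator $\kappa\D_{\rho^c}$, it follows that it is invertible with spectral gap at least $\sqrt{47/48}\,\kappa\rho$. The only delicate point is the justification in Step 2: the anticommutator of the compressions must equal the compression of the anticommutator. This holds precisely because $\mH_{\rho^c}$ reduces $\D$, and I would verify it carefully on $\Dom\D$.
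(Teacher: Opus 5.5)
Your proposal is correct and follows essentially the same route as the paper. Both bound the square from below by $\kappa^2\rho^2-\kappa\|[\D,H]\|$, using $\D_{\rho^c}^2\geq\rho^2$ and positivity of the $H$-term, and then use \cref{ass:kappa_rho} together with $\|H\|\geq g$ to get $\kappa\|[\D,H]\|<\kappa^2\rho^2/48$. Your check that $\mH_{\rho^c}$ reduces $\D$, so that the anticommutator of the compressions is the compression of $-\Gamma[\D,H]$, justifies a step the paper leaves implicit.
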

\begin{proof}
From \cref{ass:kappa_rho} we obtain the inequalities 
\begin{align*}
\kappa^2 \rho^2 
&> 4 g^2
\geq 4 \frac{12 \kappa \|H\| \|[\D,H]\|}{g} 
\geq 48 \kappa \|[\D,H]\| ,
\end{align*}
where in the last step we used $\|H\| \geq g$. 
The square of $L^\ev_{\kappa,\rho^c}$ can then be estimated by 
\begin{align*}
\big(L^\ev_{\kappa,\rho^c}\big)^2 
&= \big( \kappa \D_{\rho^c} + \Gamma_{\rho^c} H_{\rho^c} \big)^2
= \kappa^2 \D_{\rho^c}^2 + H_{\rho^c}^2 + \kappa [\D_{\rho^c},H_{\rho^c}] \Gamma_{\rho^c} \\
&\geq \kappa^2 \rho^2 - \kappa \big\| [\D,H] \big\| 
> \frac{47}{48} \kappa^2 \rho^2 ,
\end{align*}
where we used $\D_{\rho^c}^2 \geq \rho^2$, $H_{\rho^c}^2 \geq 0$, and $\|[\D_{\rho^c},H_{\rho^c}]\| \leq \|[\D,H]\|$. 
\end{proof}

Next, we need to prove that the spectral flow from $L^\ev_\kappa$ to $L^\ev_{\kappa,\rho} \oplus L^\ev_{\kappa,\rho^c}$ vanishes. 
The following lemma can be extracted from the proofs of \cite[Theorems 10.3.1 \& 10.4.1]{DSW2023}. 
\begin{lem}
\label{lem:inv_matrix}
For $j=1,2$, let $L_j$ be densely defined self-adjoint invertible operators on a Hilbert space $\mH$, with spectral gaps $g_j := \|L_j^{-1}\|^{-1}$. 
If $B \in \B(\mH)$ satisfies $\|B\| < \sqrt{g_1g_2}$, then the operator 
\[
\mattwo{L_1}{B^*}{B}{L_2} 
\quad \text{on } \Dom L_1 \oplus \Dom L_2 \subset \mH \oplus \mH 
\]
is also invertible.
\end{lem}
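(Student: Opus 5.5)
The plan is to reduce invertibility of the block operator $M := \mattwo{L_1}{B^*}{B}{L_2}$ to a Schur-complement-type estimate. It is self-adjoint on $\Dom L_1\oplus\Dom L_2$, being a bounded self-adjoint perturbation of $L_1\oplus L_2$ (Kato--Rellich). So it suffices to show $M$ is bounded below, i.e.\ $\|M(x,y)\| \geq c\|(x,y)\|$ for some $c>0$. Injectivity with closed range for a self-adjoint operator gives surjectivity, since $\Ran M^\perp = \Ker M^* = \Ker M = 0$.

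A naive triangle-inequality estimate only gives the bound for $\|B\|<\min(g_1,g_2)$, which is weaker than $\sqrt{g_1g_2}$. To reach the geometric mean, I would instead rescale by the positive diagonal operator $D:=\lambda\oplus\lambda^{-1}$ with $\lambda:=(g_2/g_1)^{1/4}$. Invertibility of $M$ is equivalent to invertibility of
\[
M' := D M D = \mattwo{\lambda^2 L_1}{B^*}{B}{\lambda^{-2}L_2},
\]
whose diagonal operators have spectral gaps $\lambda^2 g_1 = \sqrt{g_1g_2} = \lambda^{-2}g_2$. Note that $M'$ is no longer self-adjoint-equivalent in the obvious sense, but it is self-adjoint itself, since $D$ is self-adjoint and bounded invertible. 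Thus it suffices to treat the case $g_1=g_2=:g'$ with $\|B\|<g'$.

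In that case $M' = L + K$, where $L:=\lambda^2L_1\oplus\lambda^{-2}L_2$ is self-adjoint with $\|L^{-1}\|\le 1/g'$ and $K:=\mattwo{0}{B^*}{B}{0}$ has $\|K\|=\|B\|<g'$. Then $M' = L(1+L^{-1}K)$ with $\|L^{-1}K\|<1$, so $1+L^{-1}K$ is invertible by a Neumann series. Hence $M'$ is a bijection from $\Dom L$ onto $\mH\oplus\mH$, with bounded inverse $(1+L^{-1}K)^{-1}L^{-1}$. Transferring back, $M = D^{-1}M'D^{-1}$ is invertible as well, because $D$ preserves $\Dom L_1\oplus\Dom L_2$.

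The only genuine idea is the rescaling step that upgrades the bound from $\min(g_1,g_2)$ to $\sqrt{g_1g_2}$, so that is where I would be most careful. The other step to check is the domain bookkeeping: that $DMD$ has the same domain as $M$ and that invertibility passes through the conjugation. This holds because $D$ acts by scalars on each summand.
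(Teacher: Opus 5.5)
Your proof is correct. It shares the paper's overall shape (conjugate by a positive diagonal operator, then apply a perturbation argument), but it uses a different rescaling. The paper conjugates by the operator-valued diagonal $|L_1|^{-\frac12}\oplus|L_2|^{-\frac12}$. This turns the diagonal blocks into the unitaries $L_j|L_j|^{-1}$ and the off-diagonal block into $|L_2|^{-\frac12}B|L_1|^{-\frac12}$, of norm $<1$, so invertibility follows from ``unitary plus strict contraction''. You instead conjugate by the scalar diagonal $\lambda\oplus\lambda^{-1}$, which only balances the two gaps at $\sqrt{g_1g_2}$, and then run a Neumann series for $L^{-1}K$.

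\emph{What your route buys.} It is elementary, and the domain bookkeeping is trivial, since $D$ is bounded with bounded inverse and preserves $\Dom L_1\oplus\Dom L_2$. The paper's conjugator has unbounded inverse $|L_j|^{\frac12}$. So to pass from the bounded invertible conjugated operator $N$ back to the original one, one has to check that $|L|^{-\frac12}N^{-1}|L|^{-\frac12}$ maps into $\Dom L_1\oplus\Dom L_2$. It does, but the paper leaves this implicit.

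\emph{What the paper's route buys.} A sharper criterion: it gives invertibility under the weaker hypothesis $\big\||L_2|^{-\frac12}B|L_1|^{-\frac12}\big\|<1$.

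\emph{Two cosmetic points.} Your opening ``bounded below'' plan is never used: the Neumann series gives bijectivity $\Dom L\to\Dom L$ of $1+L^{-1}K$, and hence surjectivity of $M'$, directly. Also, the remark that $M'$ is ``no longer self-adjoint-equivalent'' is better phrased as follows: $DMD$ is a congruence rather than a similarity, so it preserves invertibility (and self-adjointness) but not the spectrum, which is all you need.
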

\begin{proof}
We conjugate by $|L_1|^{-\frac12} \oplus |L_2|^{-\frac12}$ to obtain a new operator 
\[
\mattwo{L_1|L_1|^{-1}}{|L_1|^{-\frac12} B^* |L_2|^{-\frac12}}{|L_2|^{-\frac12} B |L_1|^{-\frac12}}{L_2|L_2|^{-1}} .
\]
Since the operators $L_j |L_j|^{-1}$ are unitary and $\big\| |L_2|^{-\frac12} B |L_1|^{-\frac12} \big\| \leq \sqrt{g_2}^{-1} \|B\| \sqrt{g_1}^{-1} < 1$, this new operator is invertible, and the statement follows. 
\end{proof}

Now let us consider the straight line path 
\[
L(t) := L^\ev_{\kappa,\rho} \oplus L^\ev_{\kappa,\rho^c} + t \pi_\rho \Gamma H \pi_{\rho^c}^* + t \pi_{\rho^c} \Gamma H \pi_\rho^* , \quad t\in[0,1] ,
\]
between $L(0) = L^\ev_{\kappa,\rho} \oplus L^\ev_{\kappa,\rho^c}$ and $L(1) = L^\ev_\kappa$. 

\begin{lem}
\label{lem:even_spec_loc_oplus}
For sufficiently large $\rho$, 
the operator $L(t)$ is invertible for each $t\in[0,1]$, and we have the equality 
\[
\SF\big( \kappa\D-\Gamma \sfarrow L^\ev_\kappa \big) = \SF\big( \kappa\D-\Gamma \sfarrow L^\ev_{\kappa,\rho} \oplus L^\ev_{\kappa,\rho^c} \big) .
\]
\end{lem}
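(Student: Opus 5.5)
The plan is to apply \cref{lem:inv_matrix} with $L_1 = L^\ev_{\kappa,\rho} + t$-independent diagonal blocks, and then use homotopy invariance of spectral flow.

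With respect to $\mH^{\oplus n} = \mH_\rho^{\oplus n} \oplus \mH_{\rho^c}^{\oplus n}$, write
\[
L(t) = \mattwo{L^\ev_{\kappa,\rho}}{t B^*}{t B}{L^\ev_{\kappa,\rho^c}} , \qquad B := \pi_{\rho^c} \Gamma H \pi_\rho^* .
\]
This uses that $\D$ commutes with $P_{[-\rho,\rho]}(\D)$, so only $\Gamma H$ has off-diagonal parts, and $(\pi_\rho \Gamma H \pi_{\rho^c}^*)^* = B$. Since $\Gamma$ anticommutes with $\D$, it preserves $\mH_\rho$, and we may rewrite $B = \pi_{\rho^c}\Gamma\pi_{\rho^c}^*\,\pi_{\rho^c} H \pi_\rho^*$.

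The key point is that $B$ is small for large $\rho$. Indeed, $\pi_{\rho^c} H \pi_\rho^* = \pi_{\rho^c}[H, P_{[-\rho,\rho]}(\D)]\pi_\rho^*$, and I would bound $\|[H, P_{[-\rho,\rho]}(\D)]\|$ by roughly $C\|[\D,H]\|/\rho$. To do this, replace the sharp projection by a smooth function $f_\rho(\D) = f(\D/\rho)$, where $f$ is smooth, equals $1$ on $[-1,1]$ and $0$ outside $[-2,2]$. Then
\[
\|[f_\rho(\D),H]\| \le C_f \rho^{-1} \|[\D,H]\|,
\]
either via the Fourier representation $f(\D/\rho) = \int \hat f(s) e^{is\D/\rho}\,ds$ together with $\|[e^{is\D/\rho},H]\| \le |s|\rho^{-1}\|[\D,H]\|$, or via a Helffer–Sjöstrand formula.

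The sharp cutoff is the main obstacle, since the commutator with a discontinuous function of $\D$ need not be small. I would handle it as follows:
\begin{itemize}
\item Write $\pi_{\rho^c} H \pi_\rho^* = \pi_{\rho^c} H f_{\rho'}(\D) \pi_\rho^*$ with $\rho' = \rho/2$ scaled so that $f_{\rho'} = 1$ on $[-\rho,\rho]$; this requires $\rho' \ge \rho$, so adjust the scaling accordingly.
\item Simply use $f$ with $f=1$ on $[-1,1]$, so $f_\rho(\D)\pi_\rho^* = \pi_\rho^*$ and $\pi_{\rho^c} f_{\rho/2}$-type identities hold.
\item Better: take $g$ equal to $0$ on $[-1,1]$ and $1$ outside $[-2,2]$, applied at scale $\rho/2$, so that $\pi_{\rho^c} = \pi_{\rho^c} g(2\D/\rho)$ and $g(2\D/\rho)\pi_\rho^*$ is supported on $\rho/2 < |\D| \le \rho$.
\end{itemize}
This last choice does not immediately kill the term. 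The honest route is to note that the claim says ``for sufficiently large $\rho$'' and that $\mH_\rho$ grows, so $\|B\|$ itself need not tend to zero. Instead, I would directly verify the hypothesis of \cref{lem:inv_matrix}:
\[
\|tB\| \le \|H\| < \sqrt{\tfrac12 g \cdot \sqrt{47/48}\,\kappa\rho},
\]
using the gaps from \cref{thm:even_spec_loc_rho_inv} and \cref{lem:even_spec_loc_rho_c_inv}. The gap of the $\rho^c$ block grows linearly in $\rho$, so this inequality holds once $\rho > C\|H\|^2/(g\kappa)$. This crude bound avoids commutator estimates entirely and is what I would actually use. Then \cref{lem:inv_matrix} gives invertibility of $L(t)$ for all $t\in[0,1]$.

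Finally, the path $L(t)$ is a norm-continuous path of bounded perturbations of $L(0)$, hence continuous in the gap/Riesz topology, and it consists of invertibles, so $\SF(\{L(t)\}) = 0$. Each $L(t)$ differs from $\kappa\D$ by a bounded, hence relatively $\D$-compact, operator. Concatenation and the endpoint dependence of \cref{prop:sf_rel_cpt_family} then give
\[
\SF(\kappa\D-\Gamma \sfarrow L^\ev_\kappa) = \SF(\kappa\D-\Gamma \sfarrow L(0)) + \SF(\{L(t)\}) ,
\]
and the last term vanishes, which is the claimed equality.
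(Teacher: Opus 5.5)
Your final argument is correct and is essentially the paper's proof. The crude bound $\|tB\| \le \|H\| < \sqrt{\tfrac12 g\cdot\sqrt{47/48}\,\kappa\rho}$ (for $\rho$ large), together with \cref{thm:even_spec_loc_rho_inv}, \cref{lem:even_spec_loc_rho_c_inv} and \cref{lem:inv_matrix}, gives invertibility of $L(t)$. Your concatenation step (a path of invertibles has zero spectral flow, plus endpoint-dependence from \cref{prop:sf_rel_cpt_family}) is only a repackaging of the homotopy-invariance step the paper uses. The commutator-estimate detour is unnecessary and should be dropped: as you noticed, $\|B\|$ need not become small for the sharp cutoff, and what makes the argument work is that the gap of the $\rho^c$-block grows linearly in $\rho$.
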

\begin{proof}
Since $\pi_\rho$ and $\pi_{\rho^c}$ are partial isometries, we have 
\[
\left\| t \pi_\rho \Gamma H \pi_{\rho^c}^* + t \pi_{\rho^c} \Gamma H \pi_\rho^* \right\| \leq \|H\| , 
\quad \text{for all } t\in[0,1] .
\]
From \cref{thm:even_spec_loc_rho_inv,lem:even_spec_loc_rho_c_inv} we have $\big| L^\ev_{\kappa,\rho} \big| \geq g_1 := \frac{g}{2}$ and $\big| L^\ev_{\kappa,\rho^c} \big| \geq g_2 := \sqrt{\frac{47}{48}}\kappa\rho$. 
For any fixed $\kappa$, we may choose $\rho$ to be as large as needed to ensure that 
\[
\|H\| < \sqrt{g_1g_2} = \sqrt[4]{\frac{47}{192}} \sqrt{g\kappa\rho} .
\]
It then follows from \cref{lem:inv_matrix} that $L(t)$ is invertible for all $t\in[0,1]$. 
Since $\pi_\rho$ has finite-dimensional range, $L(t)$ is a compact perturbation of $L^\ev_\kappa$, so that the straight line path from $\kappa\D-\Gamma$ to $L(t)$ lies within the Fredholm operators for all $t\in[0,1]$. The homotopy invariance of the spectral flow then yields the second part of the statement. 
\end{proof}

\begin{thm}
\label{thm:even_pairing_spec_loc}
Consider an even spectral triple $(\A,\mH,\D)$ and a projection $p \in M_n(\A)$ as in \cref{setting:even}. 
Let $H \in M_n(\A)$ be invertible and self-adjoint with spectral gap $g = \|H^{-1}\|^{-1}$ such that $P_{>0}(H) = p$. 
Assume the parameters $\kappa$ and $\rho$ satisfy \cref{ass:kappa_rho}. 
Then the even index pairing of $[p] \in \K_0(A)$ with $[\D] \in \KK^0(A,\C)$ is given by 
\[
[p] \otimes_A [\D] 
= \frac12 \Sig\big( L^\ev_{\kappa,\rho}(H,\D) \big) + \frac12 \Index(\D_+) .
\]
\end{thm}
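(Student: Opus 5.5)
We start from \cref{thm:even_index_pairing_spec_loc}, which gives $[p]\otimes_A[\D] = \SF(\kappa\D-\Gamma \sfarrow L^\ev_\kappa)$. The hypothesis of that theorem is implied by \cref{ass:kappa_rho}: since $\|H\|\ge g$, we have $\kappa \le g^3/(12\|H\|\|[\D,H]\|) < g^2\|[\D,H]\|^{-1}$. By \cref{thm:even_spec_loc_rho_inv} the signature of $L^\ev_{\kappa,\rho}$ does not depend on $\kappa,\rho$ within \cref{ass:kappa_rho}. Hence we may first enlarge $\rho$ as required in \cref{lem:even_spec_loc_oplus}. That lemma then replaces the endpoint $L^\ev_\kappa$ by the block-diagonal operator $L^\ev_{\kappa,\rho}\oplus L^\ev_{\kappa,\rho^c}$ with respect to $\mH^{\oplus n} = \mH_\rho^{\oplus n}\oplus\mH_{\rho^c}^{\oplus n}$.

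Next we split the starting point the same way. Since $\Gamma$ commutes with $\D^2$, it commutes with $P_{[-\rho,\rho]}(\D)$. Therefore $\kappa\D-\Gamma$ is also diagonal, equal to $(\kappa\D_\rho-\Gamma_\rho)\oplus(\kappa\D_{\rho^c}-\Gamma_{\rho^c})$. The straight-line path between the two diagonal endpoints stays diagonal, so by additivity the spectral flow splits into two summands:
\[
\SF\big(\kappa\D_\rho-\Gamma_\rho \sfarrow L^\ev_{\kappa,\rho}\big) + \SF\big(\kappa\D_{\rho^c}-\Gamma_{\rho^c}\sfarrow L^\ev_{\kappa,\rho^c}\big).
\]

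The second summand vanishes. Along the path $\kappa\D_{\rho^c}+\Gamma_{\rho^c}\big((1-s)(-1)+sH\big)_{\rho^c}$, the computation in \cref{lem:even_spec_loc_rho_c_inv} gives the lower bound
\[
\text{square}\ \ge \kappa^2\rho^2 - s\kappa\|[\D,H]\| > 0,
\]
because the commutator term is only $s\kappa[\D,H]$. So the path stays invertible. Here $\kappa\D_{\rho^c}$ is treated as an operator on the $\D$-invariant subspace $\mH_{\rho^c}$, and compressions of $\D$ and $\Gamma$ are honest restrictions.

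The first summand lives on a finite-dimensional space. There, spectral flow between invertible endpoints equals half the difference of signatures:
\[
\tfrac12\Sig(L^\ev_{\kappa,\rho}) - \tfrac12\Sig(\kappa\D_\rho-\Gamma_\rho).
\]
It remains to show $\Sig(\kappa\D_\rho-\Gamma_\rho) = -\Index(\D_+)$, and this is the main point to check.

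On $\Ker\D$, the operator equals $-\Gamma$, whose signature is $-(\dim\Ker\D_+ - \dim\Ker\D_-) = -\Index(\D_+)$; this space is finite-dimensional since $\D$ has compact resolvents. On the orthogonal complement inside $\mH_\rho$, the odd operator $\D$ is invertible and $\Gamma$ anticommutes with $\D$. Hence $(\kappa\D-\Gamma)^2 = \kappa^2\D^2+1$ is invertible along the homotopy $\kappa\D-s\Gamma$, $s\in[0,1]$, so the signature there equals $\Sig(\kappa\D|)$. That is zero, because $\Gamma$ anticommutes with $\D$ and so swaps its positive and negative eigenspaces.

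Combining the pieces gives $[p]\otimes_A[\D] = \tfrac12\Sig(L^\ev_{\kappa,\rho})+\tfrac12\Index(\D_+)$. Here $\Index(\D_+)$ is understood with the multiplicity $n$ absorbed in the notation for $\D$ on $\mH^{\oplus n}$; I would check this convention against the statement (alternatively the factor $n$ is intended). The sign conventions in the finite-dimensional formula $\SF = \tfrac12(\Sig(\text{end})-\Sig(\text{start}))$ also need verifying against \cref{eq:SF_endpoints}.
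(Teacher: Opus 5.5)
Your proposal is correct and follows the paper's proof essentially step for step: you enlarge $\rho$, block-diagonalise via \cref{lem:even_spec_loc_oplus}, show the $\rho^c$-summand vanishes, and count signatures on the finite-dimensional space $\mH_\rho^{\oplus n}$. The variations are only cosmetic: you kill the $\rho^c$-summand with the squared estimate from \cref{lem:even_spec_loc_rho_c_inv} instead of the norm bound $\max\{1,\|H\|\}<\kappa\rho$, and you split off $\Ker\D$ before deforming rather than first deforming $\kappa\to0$. Both of your flags resolve favourably: $\SF=\frac12(\Sig(\text{end})-\Sig(\text{start}))$ agrees with the paper's convention $\SF=\relind(P_{>0}(\text{end}),P_{>0}(\text{start}))$, and since $\D$ stands for its $n$-fold amplification (as in \cref{defn:even_spec_loc}), $\Index(\D_+)$ must be read on $\mH_+^{\oplus n}$, i.e.\ as $n$ times the scalar index (taking $H=1_n$ shows the scalar reading fails for $n>1$).
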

\begin{remark}
If $\D$ is invertible, then $\Index(\D_+)$ vanishes, and we recover the main result from \cite{LS20}. If $\D$ is not invertible, the correction term given by the Fredholm index also appeared in \cite[Theorem 10.2]{Kaa25pre}, albeit in slight disguise (in the form $\tfrac12 \Sig\big( \Gamma_\rho \big)$). 

Recall that $[1] \otimes_A \cdot \colon \KK^0(A,\C) \to \K_0(\C)$ is the index map. Choosing $H=1$ we recover from \cref{thm:even_pairing_spec_loc} the equality 
\[
[1] \otimes_A [\D] = \frac12 \Sig\big( L^\ev_{\kappa,\rho}(1,\D) \big) + \frac12 \Index(\D_+) = \Index(\D_+) .
\]
Indeed, since $\D$ is odd with respect to the $\Z_2$-grading $\Gamma$ (and $\Gamma$ commutes with $P_{[-\rho,\rho]}(D) = \pi_\rho^*\pi_\rho$), we note that $L^\ev_{\kappa,\rho}(1,\D) = \kappa\D_\rho + \Gamma_\rho$ is invertible for all $\kappa\in[0,\infty)$, and in particular its signature is independent of $\kappa$. We then compute 
\[
\Sig\big( L^\ev_{\kappa,\rho}(1,\D) \big)
= \Sig\big( \Gamma_\rho \big) 
= \Sig\big( \Gamma_{\Ker\D} \big) 
= \dim\Ker(\D_+) - \dim\Ker(\D_-) 
= \Index(D_+) .
\]
Similarly, for $H=-1$ we compute 
\begin{equation}
\label{eq:Gamma_rho_Index}
\Sig\big( L^\ev_{\kappa,\rho}(-1,\D) \big) = \Sig\big( - \Gamma_\rho \big) = - \Index(D_+) ,
\end{equation}
so that, in this case, \cref{thm:even_pairing_spec_loc} reduces to the obvious equality $[0] \otimes_A [\D] = 0$. 
\end{remark}
\begin{proof}
By \cref{thm:even_spec_loc_rho_inv}, the signature of $L^\ev_{\kappa,\rho}$ is independent of the choices of $\kappa$ and $\rho$, so we may assume $\rho$ to be as large as necessary. 
From \cref{thm:even_index_pairing_spec_loc,lem:even_spec_loc_oplus} we obtain the direct sum decomposition 
\begin{align*}
[p] \otimes_A [\D] 
&= \SF\big( \kappa\D - \Gamma \sfarrow L^\ev_\kappa \big) 
= \SF\big( \kappa\D_\rho - \Gamma_\rho \sfarrow L^\ev_{\kappa,\rho} \big) + \SF\big( \kappa\D_{\rho^c}  - \Gamma_{\rho^c} \sfarrow L^\ev_{\kappa,\rho^c} \big) .
\end{align*}
The second summand is given by the spectral flow along the straight line path 
\[
\kappa \D_{\rho^c} + \pi_{\rho^c} \Gamma \big( t - 1 + t H \big) \pi_{\rho^c}^* , \quad t\in[0,1] .
\]
Choosing $\rho$ to be sufficiently large such that $\max\big\{1,\|H\|\big\} < \kappa \rho$, it follows from $\big| \kappa \D_{\rho^c} \big| \geq \kappa\rho$ that this path lies within the invertibles. 
Hence we obtain 
\[
[p] \otimes_A [\D] 
= \SF\big( \kappa\D_\rho-\Gamma_\rho \sfarrow L^\ev_{\kappa,\rho} \big) .
\]
Now the Hilbert space $\mH_\rho$ is \emph{finite-dimensional}. The spectral flow can then easily be computed in terms of the signatures of the invertible operators at the endpoints:
\[
\SF\big( \kappa\D_\rho-\Gamma_\rho \sfarrow L^\ev_{\kappa,\rho} \big) 
= \frac12 \Big( \Sig\big( L^\ev_{\kappa,\rho} \big) - \Sig\big( \kappa\D_\rho - \Gamma_\rho \big) \Big) .
\]
From \cref{eq:Gamma_rho_Index} we know that $\Sig\big( \kappa\D_\rho - \Gamma_\rho \big) = - \Index(D_+)$, and the statement follows. 
\end{proof}

\subsection{The odd index pairing}

We consider again an odd spectral triple $(\A,\mH,\D)$ over $A$ and an invertible $G \in M_n(\A)$ with spectral gap $g := \|G^{-1}\|^{-1}$ whose phase equals $u$, as in \cref{setting:odd}. 
Recall the definition of the \emph{odd spectral localiser} $L^\od_\kappa$ of the pair $(G,\D)$ with tuning parameter $\kappa\in(0,\infty)$ from \cref{defn:odd_spec_loc}. 

\begin{defn}
\label{defn:odd_spec_loc_finite}
The odd \emph{truncated} spectral localiser of the pair $(G,\D)$ with tuning parameter $\kappa$ and spectral radius $\rho\in(0,\infty)$ is defined as the operator 
\[
L^\od_{\kappa,\rho} 
\equiv L^\od_{\kappa,\rho}(G,\D) 
:= \pi_\rho L^\od_\kappa(G,\D) \pi_\rho^* 
= \mattwo{\kappa \D_\rho}{G_\rho^*}{G_\rho}{-\kappa \D_\rho} .
\]
\end{defn}

We assume that the tuning parameter $\kappa\in(0,\infty)$ and the spectral radius $\rho\in(0,\infty)$ satisfy the conditions from \cref{ass:kappa_rho}. 

\begin{thm}[{\cite[Theorem 10.4.1]{DSW2023}}]
\label{thm:odd_spec_loc_rho_inv}
The odd truncated spectral localiser $L^\od_{\kappa,\rho}$ is invertible with spectral gap $\frac12 g$. 
Furthermore, the signature of $L^\od_{\kappa,\rho}$ is independent of the choices of $\kappa$ and $\rho$ (satisfying \cref{ass:kappa_rho}). 
\end{thm}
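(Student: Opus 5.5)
The plan is to run the standard Loring--Schulz-Baldes argument (as in \cite[Theorem 10.4.1]{DSW2023}). I will first show that the square of $L^\od_{\kappa,\rho}$ is bounded below by $\frac14 g^2$. Then I will get independence of the signature from a homotopy argument in the parameters.

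For invertibility, the obstruction is that $G_\rho$ is not a product of truncations: $(GG^*)_\rho \neq G_\rho G_\rho^*$. I will therefore not square the truncated operator directly. Instead I will replace $\D$ by a bounded, smoothed version. Choose an odd, smooth, nondecreasing function $f_\rho\colon\R\to[-\rho,\rho]$ with $f_\rho(x)=x$ on $[-\rho/2,\rho/2]$ (or similar) and $|f_\rho|$ comparable to $\rho$ outside, and set $F:=f_\rho(\D)$. Since $F$ commutes with $P_{[-\rho,\rho]}(\D)$, the operator
\[
M := \mattwo{\kappa F}{G}{G^*}{-\kappa F}
\]
on the full Hilbert space has a square equal to $\kappa^2F^2\oplus\kappa^2F^2 + GG^*\oplus G^*G$ plus off-diagonal terms $\pm\kappa[F,G]$.

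The main technical step is a commutator estimate $\|[f_\rho(\D),G]\|\le C\|[\D,G]\|$, with $C$ an absolute constant. I would obtain it from a Fourier-integral representation of $f_\rho$ (or the Helffer--Sjöstrand formula), with $C\le\|\widehat{f_\rho'}\|_{L^1}$, which is scale invariant. Granting this, $M^2\geq g^2-C\kappa\|[\D,G]\|$, which exceeds $\frac34 g^2$ under \cref{ass:kappa_rho}. This is where the constant $12$ in \cref{ass:kappa_rho} should come from.

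Next I compare $M$ with $L^\od_{\kappa,\rho}\oplus(\text{something on }\mH_{\rho^c})$. Write $M$ in block form with respect to $\mH_\rho\oplus\mH_{\rho^c}$. On the $\mH_{\rho^c}$ part, $|\kappa F|\ge \kappa\rho/2 > g$ up to constants. The part of $M$ compressed to $\mH_\rho$ differs from $L^\od_{\kappa,\rho}$ only through $F_\rho - \D_\rho$, which is supported on $\rho/2\le|\D|\le\rho$. I would rather choose $f_\rho$ so that $F_\rho=\D_\rho$ exactly, i.e.\ $f_\rho(x)=x$ on $[-\rho,\rho]$ and flat beyond. With this choice the mismatch between $M$ and its diagonal compression is only the off-diagonal block $\pi_\rho G\pi_{\rho^c}^*$. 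A Schur-complement estimate then shows that the compression of $M$ to $\mH_\rho$ has gap at least $g/2$. The reason is that the off-diagonal block couples only to the region where $|\kappa F|\ge\kappa\rho>2g$, so in $(M^2)_\rho = (M_\rho)^2 + \pi_\rho M\pi_{\rho^c}^*\pi_{\rho^c}M\pi_\rho^*$ the correction term has norm at most $\|G\|^2$-type quantities. Controlling it is the main obstacle. I expect to need to bound it via $[F,G]$-type commutators localised near $|\D|\approx\rho$, again using the Fourier-integral commutator bound, together with the choice $\kappa\rho>2g$ and the factor $\|H\|$, here $\|G\|$, in \cref{ass:kappa_rho}. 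I would first try to bound the error by $\|G\|\,\kappa\|[\D,G]\|/(\kappa\rho)$-type terms, which yields $\le g^2/4$.

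For the second statement, the set of admissible $(\kappa,\rho)$ is connected: for fixed $\kappa$ the admissible $\rho$ form an interval, and $\kappa$ ranges over an interval. Along continuous changes of $\kappa$ the finite matrix $L^\od_{\kappa,\rho}$ varies continuously and stays invertible, so its signature is constant. Increasing $\rho$ across an eigenvalue of $|\D|$ adds a block to $\mH_\rho$. On the new eigenvectors $\kappa\D$ has size $\geq\kappa\rho>2g\ge2\|G\|$-relevant scale. The new diagonal block $\mattwo{\kappa\lambda}{\cdot}{\cdot}{-\kappa\lambda}$, with eigenvalues of $\D$ entering in $\pm$ pairs across the two copies, has signature zero. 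Applying \cref{lem:inv_matrix} to interpolate the off-diagonal coupling to zero, with the gaps $g/2$ and $\kappa\rho$ and $\|G\|<\sqrt{g\kappa\rho/2}$ (which needs checking), shows that the signature does not jump.
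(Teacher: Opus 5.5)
\textbf{There is a genuine gap in the invertibility step.} It sits exactly at the step you flag as ``the main obstacle'', and the way you propose to close it cannot work.

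Put $\mathcal{G} := \mattwo{0}{G}{G^*}{0}$. The leakage term $\pi_\rho\mathcal{G}\pi_{\rho^c}^*\pi_{\rho^c}\mathcal{G}\pi_\rho^*$ is not controlled by $[\D,G]$. A bounded commutator only suppresses matrix elements of $G$ between \emph{spectrally separated} regions of $\D$, not across a sharp cutoff.

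A concrete example: take $\D$ the position operator on $\ell^2(\Z)$ and $G$ the bilateral shift (the Toeplitz example). Then $g=\|G\|=\|[\D,G]\|=1$, but $\|\pi_{\rho^c}G\pi_\rho^*\|=1$ for every $\rho$. So subtracting the leakage from $(M^2)_\rho\geq\frac34 g^2$ gives nothing, and no bound of the form $\|G\|\|[\D,G]\|/\rho$ can hold.

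A Schur-complement bound does no better. Its error is of order $\|G\|^2/(\kappa\rho)$, which is small only when $\kappa\rho\gg\|G\|^2/g$, whereas \cref{ass:kappa_rho} only gives $\kappa\rho>2g$.

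The structural reason is this. Your global estimate $M^2\geq\frac34 g^2$ spends the term $\mathcal{G}^2$ on the boundary shell, which is precisely what the leakage removes. Meanwhile the surplus $\kappa^2\D^2$ on that shell has already been discarded.

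The missing idea is a smooth localisation in the spectrum of $\D$, applied directly to $L:=L^\od_{\kappa,\rho}$. The smoothing $f_\rho$ is unnecessary, since $\D_\rho$ is bounded and commutes with $\pi_\rho$.
\begin{itemize}
\item \emph{On the shell} one uses the exact identity
\[
L^2 = \kappa^2\big(\D_\rho^2\oplus\D_\rho^2\big) + \big(\mathcal{G}_\rho\big)^2 + \kappa\,\pi_\rho\mattwo{0}{[\D,G]}{-[\D,G^*]}{0}\pi_\rho^* \geq \kappa^2\big(\D_\rho^2\oplus\D_\rho^2\big) - \kappa\big\|[\D,G]\big\| .
\]
This never uses $\mathcal{G}^2$. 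Since $\kappa\rho>2g$, it gives $L^2\geq g^2-\kappa\|[\D,G]\|$ on vectors spectrally supported in $\{|\D|\geq\rho/2\}$.
\item \emph{In the interior}, take smooth $\chi_1,\chi_2$ with $\chi_1^2+\chi_2^2=1$, $\chi_1=1$ on $[-\rho/2,\rho/2]$, $\supp\chi_1\subset[-\rho,\rho]$ and $\supp\chi_2\subset\{|x|\geq\rho/2\}$. With $\Pi:=\pi_\rho^*\pi_\rho$ one has
\[
\Pi L^\od_\kappa\Pi\,\chi_1(\D) = L^\od_\kappa\chi_1(\D) - (1-\Pi)[\mathcal{G},\chi_1(\D)] .
\]
So the infinite-volume bound of \cref{lem:odd_spec_loc_inv} applies there, up to a commutator.
\item \emph{Gluing} via $\|Lv\|^2=\sum_i\|\chi_i(\D)Lv\|^2$ costs only $\|[\chi_i(\D),G]\|=O(\|[\D,G]\|/\rho)$. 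By \cref{ass:kappa_rho}, $\|[\D,G]\|/\rho<\kappa\|[\D,G]\|/(2g)\leq g/24$.
\end{itemize}
This is where your Fourier commutator estimate is really needed: for the cutoffs $\chi_i$, not for $f_\rho$. This is the kind of argument behind the cited textbook result, which the paper imports without reproof, remarking only that it does not use invertibility of $\D$.

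\textbf{The signature argument has the same defect.} \cref{lem:inv_matrix} needs $\|B\|<\sqrt{g_1g_2}$, i.e.\ roughly $\|G\|^2<\frac12 g\kappa\rho$. This is not implied by $\kappa\rho>2g$ once $\|G\|>g$. Moreover $B$ is again a coupling across a spectral cutoff, so it can have norm comparable to $\|G\|$ (for instance for a suitably weighted shift). Your argument therefore only handles large $\rho$. That is exactly the regime in which the paper uses \cref{lem:inv_matrix} (in \cref{lem:even_spec_loc_oplus,lem:odd_spec_loc_oplus}), and it does so \emph{after} $\rho$-independence is already available.

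To cover all admissible $\rho<\rho'$, show instead that the gap survives a tapered interpolation. For example, set
\[
L(t):=\kappa\big(\D_{\rho'}\oplus(-\D_{\rho'})\big)+J_t\mathcal{G}J_t ,
\qquad J_t:=P_{[-\rho,\rho]}(\D)+t\,P_{(\rho,\rho']}(|\D|) .
\]
Since $J_t$ is a contraction, a function of $\D$, and equal to $1$ on $[-\rho,\rho]$, the localisation argument above applies verbatim for every $t\in[0,1]$. At the endpoints, $L(1)=L^\od_{\kappa,\rho'}$, and $L(0)=L^\od_{\kappa,\rho}\oplus\kappa(\D\oplus(-\D))|_{\text{shell}}$, whose signature equals that of $L^\od_{\kappa,\rho}$. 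Continuity in $\kappa$ at fixed $\rho$ is fine, as you say.
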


\begin{remark}
The invertibility of the truncated spectral localiser is well-established in the literature; it was first proven in \cite{LS17}, and we cite here the result from the textbook \cite{DSW2023}. Although \cite[Theorem 10.4.1]{DSW2023} assumes $\D$ to be invertible, we note that the first part of its proof, which yields the above theorem, does not actually require this assumption. The invertibility of $\D$ is only used in the proof of the second statement in \cite[Theorem 10.4.1]{DSW2023}, which we will reprove in \cref{thm:odd_pairing_spec_loc} below without assuming invertibility of $\D$. 
\end{remark}

\begin{lem}
\label{lem:odd_spec_loc_rho_c_inv}
The operator $L^\od_{\kappa,\rho^c} := \pi_{\rho^c} L^\od_\kappa \pi_{\rho^c}^*$ is invertible with spectral gap $\sqrt{\frac{47}{48}}\kappa\rho$. 
\end{lem}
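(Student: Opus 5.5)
We mimic the proof of \cref{lem:even_spec_loc_rho_c_inv}, estimating the square of the compressed operator from below. Since $\pi_{\rho^c}$ is the compression onto $\mH_{\rho^c}^{\oplus n}$, taken componentwise in both summands of $\mH^{\oplus n}\oplus\mH^{\oplus n}$, and $\D$ commutes with its own spectral projection $P_{[-\rho,\rho]}(\D)$, we have $\pi_{\rho^c}\D\pi_{\rho^c}^* = \D_{\rho^c}$. Hence
\[
L^\od_{\kappa,\rho^c} = \mattwo{\kappa\D_{\rho^c}}{G_{\rho^c}}{G^*_{\rho^c}}{-\kappa\D_{\rho^c}} ,
\]
where we note that $(G_{\rho^c})^* = (G^*)_{\rho^c}$. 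This is a self-adjoint operator on $(\Dom\D_{\rho^c})^{\oplus n}\oplus(\Dom\D_{\rho^c})^{\oplus n}$, since it is a bounded perturbation of the self-adjoint operator $(\kappa\D_{\rho^c})\oplus(-\kappa\D_{\rho^c})$.

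Squaring as in the proof of \cref{lem:odd_spec_loc_inv} gives
\[
\big(L^\od_{\kappa,\rho^c}\big)^2 = \mattwo{\kappa^2\D_{\rho^c}^2 + G_{\rho^c}G_{\rho^c}^*}{\kappa[\D_{\rho^c},G_{\rho^c}]}{-\kappa[\D_{\rho^c},G^*_{\rho^c}]}{\kappa^2\D_{\rho^c}^2 + G^*_{\rho^c}G_{\rho^c}} ,
\]
which we write as the diagonal part plus an off-diagonal self-adjoint part. For the diagonal part we use $\D_{\rho^c}^2\geq\rho^2$ on $\mH_{\rho^c}$, and we discard the positive terms $G_{\rho^c}G^*_{\rho^c}$ and $G^*_{\rho^c}G_{\rho^c}$. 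For the off-diagonal part we need two observations. First, because $\D$ commutes with $P_{[-\rho,\rho]}(\D)$, we have $[\D_{\rho^c},G_{\rho^c}] = \pi_{\rho^c}[\D,G]\pi_{\rho^c}^*$, so its norm is at most $\|[\D,G]\|$. Second, $[\D,G^*] = -[\D,G]^*$. Together these show that the off-diagonal part is self-adjoint with norm at most $\kappa\|[\D,G]\|$. We conclude
\[
\big(L^\od_{\kappa,\rho^c}\big)^2 \geq \kappa^2\rho^2 - \kappa\|[\D,G]\| .
\]

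Finally we invoke \cref{ass:kappa_rho}, read in the odd setting with $H$ replaced by $G$. Using $\|G\|\geq g$, the same chain of inequalities as in \cref{lem:even_spec_loc_rho_c_inv} gives
\[
\kappa^2\rho^2 > 4g^2 \geq 48\kappa\|[\D,G]\| ,
\]
hence $\big(L^\od_{\kappa,\rho^c}\big)^2 > \frac{47}{48}\kappa^2\rho^2$. Therefore $L^\od_{\kappa,\rho^c}$ is invertible with spectral gap $\sqrt{\frac{47}{48}}\kappa\rho$.

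The only delicate point is the domain bookkeeping. We must check that the compression to $\mH_{\rho^c}$ commutes with $\D$, so that the square computation is legitimate on the domain of $\D_{\rho^c}^2$. We must also check that the commutator of the compressions equals the compression of the commutator. Both facts follow because $P_{[-\rho,\rho]}(\D)$ reduces $\D$ and preserves $\Dom\D$.
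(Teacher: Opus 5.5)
Your proposal is correct and is essentially the paper's proof: the paper only says the argument mirrors the even case, which is exactly what you carry out. You bound the square of the compression from below using $\D_{\rho^c}^2\geq\rho^2$, drop the positive $G_{\rho^c}G_{\rho^c}^*$ and $G_{\rho^c}^*G_{\rho^c}$ terms, control the off-diagonal block by $\kappa\|[\D,G]\|$, and apply \cref{ass:kappa_rho} with $H$ replaced by $G$.
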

\begin{proof}
The proof is similar to the proof of \cref{lem:even_spec_loc_rho_c_inv}. 
\end{proof}

We now consider the straight line path 
\[
L(t) := L^\od_{\kappa,\rho} \oplus L^\od_{\kappa,\rho^c} + t \pi_\rho \mattwo{0}{G}{G^*}{0} \pi_{\rho^c}^* + t \pi_{\rho^c} \mattwo{0}{G}{G^*}{0} \pi_\rho^* , \quad t\in[0,1] ,
\]
between $L(0) = L^\od_{\kappa,\rho} \oplus L^\od_{\kappa,\rho^c}$ and $L(1) = L^\od_\kappa$. 
\begin{lem}
\label{lem:odd_spec_loc_oplus}
For sufficiently large $\rho$, the operator $L(t)$ is invertible for each $t\in[0,1]$, and we have the equality 
\[
\SF\big( L^\od_\kappa(1,\D) \sfarrow L^\od_\kappa(G,\D) \big) = \SF\big( L^\od_\kappa(1,\D) \sfarrow L^\od_{\kappa,\rho}(G,\D) \oplus L^\od_{\kappa,\rho^c}(G,\D) \big) .
\]
\end{lem}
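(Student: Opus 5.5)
This mirrors the proof of \cref{lem:even_spec_loc_oplus}, with the pair $(L_1,L_2)$ now being $(L^\od_{\kappa,\rho},L^\od_{\kappa,\rho^c})$ on $\mH_\rho^{\oplus 2n}\oplus\mH_{\rho^c}^{\oplus 2n}$.

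Write $X := \mattwo{0}{G}{G^*}{0}$, a bounded self-adjoint operator on $\mH^{\oplus 2n}$. Because $\kappa\D\oplus(-\kappa\D)$ commutes with $P_{[-\rho,\rho]}(\D)$, the off-diagonal blocks of $L^\od_\kappa$ with respect to the decomposition $\mH_\rho\oplus\mH_{\rho^c}$ (componentwise) come only from $X$. Thus $L(1)=L^\od_\kappa$ holds, as already noted.

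We write $L(t)$ in the block form of \cref{lem:inv_matrix}, taking $L_1 = L^\od_{\kappa,\rho}$, $L_2 = L^\od_{\kappa,\rho^c}$ and $B = t\,\pi_{\rho^c} X \pi_\rho^*$. Since $\pi_\rho,\pi_{\rho^c}$ are partial isometries, $\|B\|\le\|X\|=\|G\|$ for all $t\in[0,1]$. By \cref{thm:odd_spec_loc_rho_inv}, $|L_1|\ge g_1:=\frac g2$, and by \cref{lem:odd_spec_loc_rho_c_inv}, $|L_2|\ge g_2:=\sqrt{\tfrac{47}{48}}\kappa\rho$.

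Fix $\kappa$ satisfying \cref{ass:kappa_rho}. Note that increasing $\rho$ keeps \cref{ass:kappa_rho} valid and keeps $g_1$ fixed, while $g_2\to\infty$. So we may take $\rho$ large enough that $\|G\|<\sqrt{g_1g_2}$. Then \cref{lem:inv_matrix} gives invertibility of $L(t)$ for all $t\in[0,1]$.

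For the spectral flow identity, observe that $\mH_\rho$ is finite-dimensional, so each of the blocks $\pi_\rho(\cdot)\pi_\rho^*$, $\pi_\rho X\pi_{\rho^c}^*$ and $\pi_{\rho^c}X\pi_\rho^*$ is finite rank. Hence $L(t)-L^\od_\kappa$ is finite rank, and in particular $L(t)$ is a relatively compact perturbation of $L^\od_\kappa(1,\D)$ (itself a bounded perturbation of $(\kappa\D)\oplus(-\kappa\D)$). Therefore every operator on the straight line from $L^\od_\kappa(1,\D)$ to $L(t)$ is self-adjoint Fredholm with domain $(\Dom\D)^{\oplus 2n}$, and these paths depend continuously on $t$ in the relevant (gap or Riesz) topology, since the perturbations are bounded and norm-continuous.

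We then apply homotopy invariance of spectral flow with fixed endpoint $L^\od_\kappa(1,\D)$ and moving endpoint $L(t)$, which is invertible throughout. Equivalently, we concatenate: $\SF(L^\od_\kappa(1,\D)\sfarrow L(0))+\SF(\{L(t)\}_{t\in[0,1]})=\SF(L^\od_\kappa(1,\D)\sfarrow L(1))$, and the middle term vanishes because $L(t)$ is invertible for all $t$. This gives the claimed equality.

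The step needing the most care is justifying that these spectral flows are well-defined and additive under concatenation for unbounded operators. This is handled exactly as in \cref{lem:even_spec_loc_oplus}, via \cref{prop:sf_rel_cpt_family} together with the fact that all perturbations involved are bounded. The quantitative part is routine once the gaps from \cref{thm:odd_spec_loc_rho_inv} and \cref{lem:odd_spec_loc_rho_c_inv} are available.
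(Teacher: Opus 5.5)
Your proposal is correct and follows the same route as the paper, whose proof of this lemma simply refers back to the proof of \cref{lem:even_spec_loc_oplus}. In both, the off-diagonal part is bounded by $\|G\|$, the gaps $\frac g2$ and $\sqrt{\tfrac{47}{48}}\,\kappa\rho$ are fed into \cref{lem:inv_matrix} for large $\rho$, and homotopy invariance of spectral flow is applied using that $L(t)$ differs from $L^\od_\kappa$ by a finite-rank (hence compact) perturbation.
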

\begin{proof}
The proof is similar to the proof of \cref{lem:even_spec_loc_oplus}. 
\end{proof}

\begin{thm}
\label{thm:odd_pairing_spec_loc}
Consider an odd spectral triple $(\A,\mH,\D)$ and a unitary $u \in M_n(\A)$ as in \cref{setting:odd}. 
Let $G \in M_n(\A)$ be invertible with spectral gap $g = \|G^{-1}\|^{-1}$ such that $G|G|^{-1} = u$. 
Assume the parameters $\kappa$ and $\rho$ satisfy \cref{ass:kappa_rho}. 
Then the odd index pairing of $[u] \in \K_1(A)$ with $[\D] \in \KK^1(A,\C)$ is given by 
\[
[u] \otimes_A [\D] 
= \frac12 \Sig\big( L^\od_{\kappa,\rho}(G,\D) \big) .
\]
\end{thm}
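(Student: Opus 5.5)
We mirror the proof of \cref{thm:even_pairing_spec_loc}. By \cref{thm:odd_spec_loc_rho_inv} the signature of $L^\od_{\kappa,\rho}$ does not depend on $\kappa,\rho$ within \cref{ass:kappa_rho}, so we may take $\rho$ as large as needed. Combining \cref{thm:odd_index_pairing_spec_loc} with \cref{lem:odd_spec_loc_oplus}, we get
\[
[u]\otimes_A[\D] = \SF\big( L^\od_\kappa(1,\D) \sfarrow L^\od_{\kappa,\rho}(G,\D)\oplus L^\od_{\kappa,\rho^c}(G,\D) \big) .
\]
The starting point $L^\od_\kappa(1,\D)$ commutes with $P_{[-\rho,\rho]}(\D)\oplus P_{[-\rho,\rho]}(\D)$, because its off-diagonal entries are the identity. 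It therefore splits as $L^\od_{\kappa,\rho}(1,\D)\oplus L^\od_{\kappa,\rho^c}(1,\D)$, and the straight line path is diagonal. Hence the spectral flow splits as
\[
\SF\big( L^\od_{\kappa,\rho}(1,\D) \sfarrow L^\od_{\kappa,\rho}(G,\D) \big) + \SF\big( L^\od_{\kappa,\rho^c}(1,\D) \sfarrow L^\od_{\kappa,\rho^c}(G,\D) \big) .
\]

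For the second summand, the path is
\[
\mattwo{\kappa\D_{\rho^c}}{(1-t+tG)_{\rho^c}}{(1-t+tG^*)_{\rho^c}}{-\kappa\D_{\rho^c}} .
\]
We show it stays invertible. Write it as $\kappa\D_{\rho^c}\oplus(-\kappa\D_{\rho^c})$ plus a self-adjoint bounded perturbation of norm at most $\max\{1,\|G\|\}$. The unperturbed operator has spectral gap at least $\kappa\rho$. Choosing $\rho$ with $\max\{1,\|G\|\}<\kappa\rho$, every operator on the path is invertible, so the second summand vanishes.

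The first summand lives on the finite-dimensional space $\mH_\rho^{\oplus 2n}$ with invertible endpoints (\cref{thm:odd_spec_loc_rho_inv} for the right endpoint). It therefore equals $\tfrac12\big(\Sig L^\od_{\kappa,\rho}(G,\D) - \Sig L^\od_{\kappa,\rho}(1,\D)\big)$. It remains to show that $\Sig L^\od_{\kappa,\rho}(1,\D)=0$, which is the step replacing the index correction of the even case.

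To see this, note that $L^\od_{\kappa,\rho}(1,\D)$ anticommutes with the unitary $J=\mattwo{0}{-1}{1}{0}$ on $\mH_\rho^{\oplus n}\oplus\mH_\rho^{\oplus n}$:
\[
J\mattwo{\kappa\D_\rho}{1}{1}{-\kappa\D_\rho}J^* = \mattwo{-\kappa\D_\rho}{-1}{-1}{\kappa\D_\rho} .
\]
So the spectrum of the invertible matrix $L^\od_{\kappa,\rho}(1,\D)$ is symmetric with multiplicities, and its signature vanishes. Alternatively, its square is $\kappa^2\D_\rho^2+1>0$ for every $\kappa\geq0$, so the signature is constant in $\kappa$; at $\kappa=0$ the matrix $\mattwo{0}{1}{1}{0}$ has signature $0$.

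The main obstacle is justifying the block splitting of the spectral flow, in particular that the second summand is a legitimate spectral flow of unbounded self-adjoint Fredholm operators. This is handled exactly as in the even case: the path is a bounded perturbation of $\kappa\D_{\rho^c}\oplus(-\kappa\D_{\rho^c})$, and additivity of spectral flow under direct sums applies.
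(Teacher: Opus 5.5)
Your proposal is correct and follows essentially the same route as the paper: you split the spectral flow using \cref{thm:odd_index_pairing_spec_loc} and \cref{lem:odd_spec_loc_oplus}, make the $\rho^c$ summand vanish by choosing $\max\{1,\|G\|\}<\kappa\rho$, and write the finite-dimensional spectral flow as half the difference of signatures, with $\Sig L^\od_{\kappa,\rho}(1,\D)=0$ obtained (as in the paper) by deforming $\kappa$ to $0$. Two things you add are also valid: the explicit justification of the block-diagonal splitting, and the alternative argument that the unitary $J$ conjugates $L^\od_{\kappa,\rho}(1,\D)$ to its negative.
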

\begin{remark}
This recovers the main result of \cite{LS17}, without assuming $\D$ to be invertible. In contrast with \cref{thm:even_pairing_spec_loc}, there is no correction term for non-invertible $\D$, due to the simple fact that the matrix $\mattwo{0}{1}{1}{0}$ has vanishing signature.
\end{remark}
\begin{proof}
The proof is similar to the proof of \cref{thm:even_pairing_spec_loc}, so we shall be somewhat brief. 
By \cref{thm:odd_spec_loc_rho_inv}, the signature of $L^\od_{\kappa,\rho}(G,\D)$ is independent of the choices of $\kappa$ and $\rho$, so we may assume $\rho$ to be as large as necessary. 
From \cref{thm:odd_index_pairing_spec_loc,lem:odd_spec_loc_oplus} we obtain the direct sum decomposition 
\begin{align*}
[u] \otimes_A [\D] 
&= \SF\big( L^\od_{\kappa,\rho}(1,\D) \sfarrow L^\od_{\kappa,\rho}(G,\D) \big) + \SF\big( L^\od_{\kappa,\rho^c}(1,\D) \sfarrow L^\od_{\kappa,\rho^c}(G,\D) \big) .
\end{align*}
Choosing $\rho$ to be sufficiently large such that $\max\big\{1,\|G\|\big\} < \kappa \rho$, the second summand vanishes and we obtain 
\[
[u] \otimes_A [\D] 
= \SF\big( L^\od_{\kappa,\rho}(1,\D) \sfarrow L^\od_{\kappa,\rho}(G,\D) \big) 
= \frac12 \Big( \Sig\big( L^\od_{\kappa,\rho}(G,\D) \big) - \Sig\big( L^\od_{\kappa,\rho}(1,\D) \big) \Big) .
\]
Finally, we note that $L^\od_{\kappa,\rho}(1,\D)$ is invertible for all $\kappa$, and therefore its signature is independent of $\kappa$. The statement then follows from 
\[
\Sig\big( L^\od_{\kappa,\rho}(1,\D) \big) 
= \Sig\left( \mattwo{0}{1}{1}{0} \right) 
= 0 .
\qedhere
\]
\end{proof}

\appendix 

\section{Fredholm operators and \K-theory}
\label{app:Fredholm}

Let $E$ be a Hilbert $C^*$-module over a $C^*$-algebra $A$. We denote by $\mL_A(E)$ the $C^*$-algebra of adjointable operators on $E$, and by $\mK_A(E)$ the $C^*$-subalgebra of compact ($=$ `generalised compact' or $A$-compact) operators. In this Appendix, we will consider Fredholm operators on $E$ and in particular their index and spectral flow taking values in the $\K$-theory of $A$. Some computations make use of Kasparov's bivariant $\KK$-theory and the Kasparov product \cite{Kas80b}, and we refer to the book \cite{Blackadar98} for further details on this theory. 

A regular operator $\D$ on $E$ is called \emph{Fredholm} if there exist a \emph{left parametrix} $Q_l$ and a \emph{right parametrix} $Q_r$ such that (the closure of) $Q_l\D - 1$ and $\D Q_r - 1$ are compact endomorphisms on $E$. 
If $\D$ is regular and Fredholm, we denote by $\Index(\D) \in \K_0(A)$ its Fredholm index (for its construction, we refer to \cite[\S2.2]{vdD19_Index_DS} and references therein). 

Let now $\D$ be a regular \emph{self-adjoint} Fredholm operator on $E$. We consider both the $\Z_2$-graded case ($j=0$) and the ungraded case ($j=1$). In the graded case ($j=0$), we assume that $\D$ is odd with respect to the direct sum decomposition $E = E_+ \oplus E_-$ given by the $\Z_2$-grading, so that we may write 
\[
\D = \mattwo{0}{\D_-}{\D_+}{0} .
\]
By \cite[Proposition 2.14]{vdD19_Index_DS}, $\D$ yields a well-defined class $[\D]$ in $\KK^0(\C,A) \simeq \K_0(A)$ resp.\ $\KK^1(\C,A) \simeq \K_1(A)$ (see \cite[\S2.2]{vdD19_Index_DS} for the construction of this class). Furthermore, if two such operators $\D$ and $\D'$ are homotopic, then $[\D]=[\D']$. 
In the graded case ($j=0$), the class $[\D] \in \KK^0(\C,A)$ corresponds to $\Index(\D_+) \in \K_0(A)$ under the standard isomorphism $\KK^0(\C,A) \simeq \K_0(A)$. 

We recall from \cite[Proposition A.11]{vdD25_Callias_25a} the stability under relatively compact perturbations: 
if $R$ is a symmetric operator on $E$ which is relatively $\D$-compact (i.e., $R(\D\pm i)^{-1}$ is compact), then $\D+R$ is also regular, self-adjoint, and Fredholm, and $[\D+R] = [\D] \in \KK^j(\C,A)$ (where $j=0$ if $R,\D$ are odd, and $j=1$ otherwise).

\subsection{Relative index and spectral flow}

We briefly recall the definitions and basic properties of the (even or odd) relative index and the (even or odd) spectral flow on a Hilbert $A$-module $E$, taking values in the \K-theory group $\K_j(A)$. For a more detailed exposition, we refer to \cite[Appendix A]{vdD25_DS_ind_sf_25b} and \cite[\S3 \& \S8]{Wah07}. 

Consider two projections $P,Q \in \mL_A(E)$, such that the difference $P-Q$ is a \emph{compact} endomorphism on $E$. 
In the ungraded case ($j=0$), we note that compactness of $P-Q$ implies that $Q \colon \Ran(P) \to \Ran(Q)$ is a Fredholm operator and thus has a $\K_0(A)$-valued index. 
In the graded case ($j=1$), we assume $E = E_+ \oplus E_-$ is $\Z_2$-graded, and we require in addition that $2P-1$ and $2Q-1$ are odd. 
In this case, we can write 
\begin{equation}
\label{eq:proj_odd}
P = \frac12 \mattwo{1}{U_P^*}{U_P}{1} ,
\quad\text{and}\quad
Q = \frac12 \mattwo{1}{U_Q^*}{U_Q}{1} ,
\end{equation}
where $U_P,U_Q\colon E_+ \to E_-$ are unitaries such that $U_P U_Q^*$ lies in the minimal unitisation of the compact operators on $E_-$. 

\begin{defn}
\label{defn:rel-ind}
Consider projections $P,Q \in \mL_A(E)$ with $P-Q \in \mK_A(E)$. 
We define the (even or odd) relative index $\relind_j(P,Q) \in \K_j(A)$ as follows. 
In the ungraded case $j=0$, we define the \emph{(even) relative index of $(P,Q)$} by 
\[
\relind(P,Q) \equiv \relind_0(P,Q) := \Index \big( Q \colon \Ran(P) \to \Ran(Q) \big) \in \K_0(A) .
\]
In the graded case $j=1$, we additionally require $2P-1$ and $2Q-1$ to be odd, and define the \emph{odd relative index of $(P,Q)$} by 
\[
\relind_1(P,Q) := \left[ \mattwo{1}{0}{0}{U_P U_Q^*} \right] \in \K_1(A) ,
\]
where $U_P$ and $U_Q$ are obtained from $P$ and $Q$ as in \cref{eq:proj_odd}.
\end{defn}

Now consider a regular self-adjoint operator $\D(\cdot)$ on the Hilbert $C([0,1],A)$-module $C([0,1],E)$ corresponding to a family of operators $\{\D(t)\}_{t\in[0,1]}$ on $E$. 
We can define its (even or odd) spectral flow, provided that there exist locally trivialising families (see \cite[Definition 2.6]{vdD25_Callias_25a}). Note that, in our main case of interest, such trivialising families always exist (see \cref{prop:sf_rel_cpt_family} below). 

Given a regular self-adjoint Fredholm operator $\D$ on $E$ and two trivialising operators $\B_0$ and $\B_1$ for $\D$, we note that $P_{>0}(\D+\B_1) - P_{>0}(\D+\B_0)$ is compact (and if $j=1$, the operators $2P_{>0}(\D+\B_0)-1$ and $2P_{>0}(\D+\B_1)-1$ are odd), and we introduce the notation
\begin{equation}
\label{eq:ind}
\ind_j(\D,\B_0,\B_1) := \relind_j\big( P_{>0}(\D+\B_1) , P_{>0}(\D+\B_0) \big) .
\end{equation}
The spectral flow is then defined as follows. 

\begin{defn}
\label{defn:spectral_flow}
Let $j=0$ or $j=1$. 
Let $\D(\cdot) = \{\D(t)\}_{t\in[0,1]}$ be a regular self-adjoint operator on the Hilbert $C([0,1],A)$-module $C([0,1],E)$, for which locally trivialising families exist. (If $j=1$, we require $E$ to be $\Z_2$-graded and $\D$ (as well as all trivialising families) to be odd.) 
Let $0 = t_0 < t_1 < \ldots < t_n = 1$ be such that there is a trivialising family $\{\B^i(t)\}_{t\in[t_i,t_{i+1}]}$ of $\{\D(t)\}_{t\in[t_i,t_{i+1}]}$ for each $i=0,\ldots,n-1$. 
Assume that the endpoints $\D(0)$ and $\D(1)$ are invertible. 
Then we define 
\begin{multline*}
\SF_j\big(\{\D(t)\}_{t\in[0,1]} \big) 
:= \ind_j\big(\D(0),0,\B^0(0)\big) + \sum_{i=1}^{n-1} \ind_j\big(\D(t_i),\B^{i-1}(t_i),\B^i(t_i)\big) \\
+ \ind_j\big(\D(1),\B^{n-1}(1),0\big) 
\in K_j(A) ,
\end{multline*}
where $\ind_j$ is defined in \cref{eq:ind}. 
We call $\SF_0$ the (even) \emph{spectral flow} and $\SF_1$ the \emph{odd spectral flow} of the family $\{\D(t)\}_{t\in[0,1]}$. 
We will often simply write $\SF \equiv \SF_0$ for the (even) spectral flow. 
The definition of the spectral flow is independent of the choice of subdivision and the choice of trivialising families $\{\B^i(t)\}_{t\in[t_i,t_{i+1}]}$. 
\end{defn}

\begin{prop}[{\cite[Proposition 2.8]{vdD25_Callias_25a} \& \cite[Proposition A.11]{vdD25_DS_ind_sf_25b}}]
\label{prop:sf_rel_cpt_family}
Let $\D(\cdot) = \{\D(t)\}_{t\in[0,1]}$ be a regular self-adjoint operator on the Hilbert $C([0,1],A)$-module $C([0,1],E)$. 
(In the graded case $j=1$, $E$ is $\Z_2$-graded and $\D$ is odd.) 
Assume that
the endpoints $\D(0)$ and $\D(1)$ are invertible, 
$\D(t) \colon \Dom\D(0) \to E$ depends norm-continuously on $t$, and 
$\D(t)-\D(0)$ is relatively $\D(0)$-compact for each $t\in[0,1]$. 
Then there exists a trivialising family for $\{\D(t)\}_{t\in[0,1]}$ and 
\begin{align*}
\SF_j\big(\{\D(t)\}_{t\in[0,1]} \big) 
&= \relind_j\big( P_{>0}(\D(1)) , P_{>0}(\D(0)) \big) 
\in \K_j(A) .
\end{align*}
\end{prop}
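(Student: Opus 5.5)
The plan is to reduce everything to two properties of the relative index: the \emph{cocycle identity}
\[
\relind_j(P,Q)+\relind_j(Q,R)=\relind_j(P,R)
\]
whenever all pairwise differences are compact, and \emph{homotopy invariance}: $\relind_j(P_s,Q)$ is constant along a norm-continuous path of projections $P_s$ with $P_s-Q$ compact. In the graded case the projections are also required to have odd $2P_s-1$. Both properties follow directly from \cref{defn:rel-ind}. In the even case one uses additivity of the $\K_0(A)$-valued index under composition of Fredholm operators $\Ran P\to\Ran Q\to\Ran R$, and its homotopy invariance. In the odd case $U_PU_R^*=(U_PU_Q^*)(U_QU_R^*)$, and $[\cdot]$ is a homomorphism on the unitisation of the compacts. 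The proposition then becomes a telescoping argument.

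\textbf{Step 1 (compact differences of spectral projections).} Write $\D_0:=\D(0)$. Since $\D(t)-\D_0$ is relatively $\D_0$-compact, the stability result recalled above (\cite[Proposition A.11]{vdD25_Callias_25a}) shows that each $\D(t)$ is regular, self-adjoint and Fredholm with $\Dom\D(t)=\Dom\D_0$. Norm-continuity of $t\mapsto\D(t)\colon\Dom\D_0\to E$ gives norm-continuity of the resolvents $t\mapsto(\D(t)\pm i)^{-1}$. The resolvent identity shows that $(\D(t)+B\pm i)^{-1}-(\D(s)+B'\pm i)^{-1}$ is compact for all $s,t$ and all compact self-adjoint $B,B'$. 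Now take $\D(t)+B$ invertible. Then $P_{>0}(\D(t)+B)=f(\D(t)+B)$, where $f$ is a continuous function equal to $\frac12(1+\mathrm{sign})$ on the spectrum. Approximating $f$ by polynomials in the resolvents, I would show that any two such positive spectral projections differ by a compact operator. In the odd case the same argument applies, and oddness of $\D$ and $B$ makes $2P_{>0}-1$ odd.

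\textbf{Step 2 (trivialising families).} For each $t_*\in[0,1]$, I would produce a compact self-adjoint (odd, if $j=1$) operator $B$ such that $\D(t_*)+B$ is invertible. At the endpoints one takes $B=0$. Invertibility is open under the norm-resolvent topology, so $\D(t)+B$ stays invertible for $t$ near $t_*$. Compactness of $[0,1]$ then yields a subdivision $0=t_0<\dots<t_n=1$ with constant trivialising operators $B^i$ on $[t_i,t_{i+1}]$, and we may take $B^0=B^{n-1}=0$ near the endpoints. \emph{This is the step I expect to be the main obstacle.} On a Hilbert module, a Fredholm operator need not be compactly perturbable to an invertible one, because the index may be nonzero. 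A self-adjoint Fredholm operator has vanishing index, so such a perturbation should exist. I would first try to obtain it, if necessary after stabilising $E\oplus H_A$ via Kasparov's theorem, by cutting $\D(t_*)$ down near $0$ using a spectral-gap argument in the Calkin algebra. Failing that, I would simply invoke the existence result \cite[Proposition 2.8]{vdD25_Callias_25a} for locally trivialising families.

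\textbf{Step 3 (telescoping).} Write $P_i^\pm$ for $P_{>0}(\D(t)+B^i)$ evaluated at $t=t_i$ (for $+$) or $t=t_{i+1}$ (for $-$). On $[t_i,t_{i+1}]$ the projections $P_{>0}(\D(t)+B^i)$ vary norm-continuously, since the operators stay invertible with a uniform gap. By Step 1 their differences from $P_{>0}(\D_0)$ are compact, so homotopy invariance gives
\[
\relind_j\big(P_i^-,P_{>0}(\D_0)\big)=\relind_j\big(P_i^+,P_{>0}(\D_0)\big).
\]
Expanding each term $\ind_j(\D(t_i),B^{i-1},B^i)$ of \cref{defn:spectral_flow} via the cocycle identity relative to the base point $P_{>0}(\D_0)$, all intermediate contributions cancel. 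What remains is $\relind_j\big(P_{>0}(\D(1)),P_{>0}(\D(0))\big)$, which is the claimed formula.
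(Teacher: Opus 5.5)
Your Step 2 is a genuine gap, and it is exactly where the content of the proposition lies. The existence of a trivialising family is part of the conclusion. The statement in fact asserts a single family on all of $[0,1]$, not just local ones. So your fallback of invoking \cite[Proposition 2.8]{vdD25_Callias_25a} is circular, because that is the result being stated. The heuristic you give instead does not work on Hilbert modules. A self-adjoint Fredholm operator does have vanishing $\K_0(A)$-valued index, but that is not the obstruction to a compact trivialisation. The obstruction is the class of $\D(t_*)$ in $\K_1(A)$ in the ungraded case, resp.\ $\Index(\D(t_*)_+)\in\K_0(A)$ in the graded case, and both can be nonzero in general. Here they do vanish, since $\D(t_*)$ is a relatively compact perturbation of the invertible $\D(0)$. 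But vanishing of a $\K$-theory class only lets you lift the relevant projection (resp.\ unitary) in the Calkin algebra $\mL_A(E)/\mK_A(E)$ after stabilisation. Stabilising changes the family whose spectral flow you are computing. For unbounded $\D(t_*)$ the lift would also have to be compatible with $\Dom\D(t_*)$. None of this is supplied.

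The missing idea is that no compact, local trivialisation is needed. Put $\B(t):=\D(0)-\D(t)$. By the hypotheses, this is a symmetric operator (odd in the graded case) on $\Dom\D(0)=\Dom\D(t)$. It is relatively $\D(0)$-compact and depends norm-continuously on $t$ as a map $\Dom\D(0)\to E$. Hence $\B(\cdot)$ is a relatively compact symmetric perturbation of $\D(\cdot)$ on $C([0,1],E)$, and $\D(t)+\B(t)=\D(0)$ is invertible for every $t$. So it is a trivialising family on the whole interval. In the cited framework, trivialising operators are relatively compact symmetric perturbations, not necessarily compact operators; this is what the stability result recalled at the start of the appendix is for. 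Take the trivial subdivision in \cref{defn:spectral_flow}, and use $\B(0)=0$ and $\D(1)+\B(1)=\D(0)$. This gives directly
\[
\SF_j\big(\{\D(t)\}_{t\in[0,1]}\big)
= \relind_j\big(P_{>0}(\D(0)),P_{>0}(\D(0))\big) + \relind_j\big(P_{>0}(\D(1)),P_{>0}(\D(0))\big) ,
\]
and the first term vanishes. This makes your Steps 1 and 3 unnecessary. Your telescoping in Step 3 is a correct reduction, but only once Step 2 is in place. In Step 1, note that $\frac12(1+\mathrm{sign})$ has different limits at $\pm\infty$, so it is not a uniform limit of polynomials in the resolvents. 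The compactness of the projection differences has to come from compactness of differences of bounded transforms instead.
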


In the setting of \cref{prop:sf_rel_cpt_family}, it follows in particular that the spectral flow depends only on the endpoints $\D(0)$ and $\D(1)$. We then introduce the notation $\SF_j \big( \D(0) \sfarrow \D(1) \big)$ for the spectral flow of the straight line path from $\D(0)$ to $\D(1)$:
\begin{equation}
\label{eq:SF_endpoints}
\SF_j \big( \D(0) \sfarrow \D(1) \big) := \SF_j \big( \big\{ (1-t) \D(0) + t \D(1) \big\}_{t\in[0,1]} \big) .
\end{equation}

\begin{coro}[{\cite[Corollary A.12]{vdD25_DS_ind_sf_25b}}]
\label{coro:sf_index_PuP}
Let $\D$ be an invertible regular self-adjoint operator on the Hilbert $A$-module $E$. 
Consider a unitary operator $u \in \mL_A(E)$ such that $u\colon\Dom\D\to\Dom\D$ and $[\D,u]$ is relatively $\D$-compact. 
Let $\chi \colon [0,1] \to \R$ be any continuous function satisfying $\chi(0) = 0$ and $\chi(1) = 1$. 
For $t\in[0,1]$ we define $\D(t) := (1-\chi(t)) \D + \chi(t) u^* \D u = \D + \chi(t) u^* [\D,u]$. 
Then we have the equality 
\[
\SF_0 \big( \D \sfarrow u^* \D u \big)
= \SF_0\big(\{\D(t)\}_{t\in[0,1]} \big) 
= - \Index \big( P_{>0}(\D) u P_{>0}(\D) \big) 
\in \K_0(A) .
\]
\end{coro}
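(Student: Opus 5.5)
The plan is to reduce everything to \cref{prop:sf_rel_cpt_family} and then identify the resulting relative index with the index of the compression $P_{>0}(\D)\,u\,P_{>0}(\D)$. Throughout, write $P := P_{>0}(\D)$.

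First, check the hypotheses of \cref{prop:sf_rel_cpt_family} for the family $\D(t) = \D + \chi(t)\,u^*[\D,u]$ (with $j=0$, ungraded).
\begin{itemize}
\item Since $u$ preserves $\Dom\D$, each $\D(t)$ has domain $\Dom\D$.
\item The map $t\mapsto \D(t)$, viewed as operators $\Dom\D\to E$, is norm-continuous. This holds because $\chi$ is continuous and $u^*[\D,u]$ is bounded from $\Dom\D$ (with graph norm) to $E$; boundedness follows from relative $\D$-compactness.
\item The difference $\D(t)-\D = \chi(t)u^*[\D,u]$ is relatively $\D$-compact.
\item The endpoint $\D(0)=\D$ is invertible by assumption, and $\D(1)=u^*\D u$ is invertible as a unitary conjugate of an invertible operator.
\end{itemize}
The proposition then gives
\[
\SF_0\big(\{\D(t)\}_{t\in[0,1]}\big)=\relind\big(P_{>0}(u^*\D u),P\big).
\]
This value is independent of $\chi$. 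Taking $\chi(t)=t$ yields the straight line path, which proves the first equality $\SF_0(\D\sfarrow u^*\D u)=\SF_0(\{\D(t)\})$.

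Second, compute the relative index. Functional calculus commutes with unitary conjugation, so $P_{>0}(u^*\D u)=u^*Pu$. Compactness of $u^*Pu-P$ is part of the conclusion of \cref{prop:sf_rel_cpt_family}, since the relative index is well defined there. Hence
\[
[P,u]=u\,(u^*Pu-P)
\]
is compact, and so $PuP$ and $Pu^*P$ are Fredholm on $\Ran P$.

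By definition, $\relind(u^*Pu,P)=\Index\big(P\colon \Ran(u^*Pu)\to\Ran P\big)$. Precompose with the unitary $u^*\colon \Ran P\to\Ran(u^*Pu)$, which is an isomorphism and so has index zero. By additivity of the $\K_0(A)$-valued index under composition,
\[
\relind(u^*Pu,P)=\Index\big(Pu^*P\colon \Ran P\to\Ran P\big).
\]
Finally, $Pu^*P=(PuP)^*$, and the index of the adjoint is the negative, so this equals $-\Index(PuP)$.

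I expect the main obstacle to be keeping the Hilbert-module index theory rigorous rather than the algebra itself. Two points need care: the index must be additive under composition, and $\Index(T^*)=-\Index(T)$ must hold for $\K_0(A)$-valued indices of adjointable Fredholm operators on $\Ran P$. Both should follow from the construction in \cite[\S2.2]{vdD19_Index_DS}. A secondary point is the norm continuity of $t\mapsto\D(t)$ in the graph norm, which uses boundedness of $[\D,u](\D\pm i)^{-1}$.
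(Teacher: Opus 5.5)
Your proof is correct and is essentially the intended argument. The paper does not reprove this corollary but cites it, and its placement right after \cref{prop:sf_rel_cpt_family} reflects your route: apply that proposition to obtain $\relind\big(u^*P_{>0}(\D)u,P_{>0}(\D)\big)$, then rewrite this relative index as $\Index\big(P_{>0}(\D)u^*P_{>0}(\D)\big)=-\Index\big(P_{>0}(\D)uP_{>0}(\D)\big)$. The one step you pass over quickly is that $\D(1)=\D+u^*[\D,u]$ on $\Dom\D$ coincides with $u^*\D u$ on $u^*(\Dom\D)$; this holds because the former is self-adjoint (a symmetric relatively compact perturbation of $\D$) and is contained in the self-adjoint operator $u^*\D u$, and it is what justifies both $P_{>0}(\D(1))=u^*P_{>0}(\D)u$ and the invertibility of $\D(1)$.
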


The following result shows that the (even or odd) spectral flow implements the Bott periodicity isomorphism $\K_{j+1}\big(C_0(\R,A)\big) \xrightarrow{\simeq} \K_j(A)$, which is given by taking the Kasparov product with the \K-homology class $[-i\partial_t] \in \KK^1\big( C_0(\R) , \C \big)$. The latter is represented by the spectral triple $\big( C_c^\infty(\R) , L^2(\R) , -i\partial_t \big)$. 

\begin{prop}[{\cite[Proposition A.13]{vdD25_DS_ind_sf_25b}, cf.\ \cite[\S4 \& \S8]{Wah07}}]
\label{prop:SF_Kasp_prod}
Consider a regular self-adjoint Fredholm operator $\D(\cdot) = \{\D(t)\}_{t\in[0,1]}$ on the Hilbert $C([0,1],A)$-module $C([0,1],E)$, with $\D(0)$ and $\D(1)$ invertible. 
(In the graded case $j=1$, $E$ is $\Z_2$-graded and each $\D(t)$ is odd.) 
We extend the family to $\R$ by setting $\D(t) := \D(0)$ for all $t<0$ and $\D(t) := \D(1)$ for all $t>1$, and we view $\D(\cdot)$ as a regular self-adjoint Fredholm operator on the Hilbert $C_0(\R,A)$-module $C_0(\R,E)$, defining a class $[\D(\cdot)] \in \K_{j+1}\big(C_0(\R,A)\big)$. 

If there exist locally trivialising families for $\{\D(t)\}_{t\in\R}$, then
\[
\SF_j\big(\{\D(t)\}_{t\in[0,1]}\big) = [\D(\cdot)] \otimes_{C_0(\R)} [-i\partial_t] \in K_j(A) .
\]
\end{prop}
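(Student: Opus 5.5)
The plan is to compute the Kasparov product on the right-hand side explicitly in the unbounded picture. This identifies it with the index of a Robbin--Salamon type operator $\partial_t + \D(t)$. We then show that this index satisfies the same axioms as $\SF_j$ (normalisation, additivity under concatenation, homotopy invariance), and reduce to the elementary situation covered by \cref{prop:sf_rel_cpt_family}.

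\textbf{Step 1 (the product operator).} Consider the case $j=0$; the case $j=1$ is the same with an extra grading. The class $[\D(\cdot)]\in\K_1(C_0(\R,A))$ is odd and $[-i\partial_t]\in\KK^1(C_0(\R),\C)$ is odd. The product is therefore represented on $L^2(\R,E)\otimes\C^2$ by
\[
\mathcal{T} := \mattwo{0}{-\partial_t+\D(t)}{\partial_t+\D(t)}{0} .
\]
This holds because $-i\partial_t$ has bounded commutators with $C_c^\infty$-functions, and $\D(\cdot)$ acts fibrewise. I would verify Kuchar's connection and positivity conditions, which is the usual Kucerovsky criterion; the only input beyond this is that $\D(t)$ is locally constant outside $[0,1]$. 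Since $\D(0)$ and $\D(1)$ are invertible, $\partial_t+\D(t)$ is Fredholm on the Hilbert $A$-module $L^2(\R,E)$. The product thus equals $\Index(\partial_t+\D(\cdot))\in\K_0(A)$.

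\textbf{Step 2 (reduction).} Choose a subdivision $0=t_0<\dots<t_n=1$ with trivialising families $\B^i$. Both sides are additive under concatenation of paths through invertible endpoints. For the index this follows from a standard gluing/excision argument, since the operator is invertible near the gluing points. Both sides are also invariant under homotopies of paths with fixed invertible endpoints. Adding and subtracting the compact pieces $\B^i$ then reduces the claim to two cases:
\begin{enumerate}
\item paths that stay invertible throughout, where both sides vanish, because $\partial_t+\D(t)$ is then invertible after a homotopy to a constant path;
\item paths of the form $\D(t)=\D+\chi(t)\B$ with $\D$ and $\D+\B$ invertible and $\B$ compact.
\end{enumerate}

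\textbf{Step 3 (the basic case).} For $\D(t)=\D+\chi(t)\B$, I would first deform $\D$ and $\D+\B$ through invertibles to their phases. These are symmetries $2P-1$ and $2Q-1$ with $P-Q$ compact. One then computes $\Index(\partial_t + (2P_t-1))$ directly.

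\emph{Main obstacle.} I expect this Hilbert-module version of the identity $\Index = \relind(P,Q)$ to be the hardest step, because one cannot count eigenvalues. My first attempt would use Kuiper-type stabilisation. After adding a trivial module, $Q$ can be compared with a projection $P'$ that differs from $P$ by a projection onto a finitely generated projective submodule $V$. On $V$, the operator reduces to the scalar model $\partial_t+\operatorname{sign}$-flip on $L^2(\R)\otimes V$, whose index is $[V]$ by the one-dimensional computation $\Index(\partial_t+\tanh t)=1$. This yields $\relind(P,Q)$, and hence the right-hand side of \cref{prop:sf_rel_cpt_family}. In the odd case the analogous model produces the odd relative index of \cref{defn:rel-ind}.
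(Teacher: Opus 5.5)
The paper gives no proof of this proposition; it defers to Proposition A.13 of the cited work and to Wahl. So the question is whether your route closes on its own. The skeleton of your Steps 2–3 is sound: homotopy invariance, additivity under concatenation, vanishing on invertible paths, and a normalisation for the basic paths $L+\chi\B$ after passing to phases and stabilising down to a finitely generated projective piece.

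\textbf{The gap is Step 1, through which you route everything.} Under the hypotheses of the statement, $\D(t)$ is merely Fredholm and merely continuous in $t$. There the identification $[\D(\cdot)]\otimes_{C_0(\R)}[-i\partial_t]=\Index(\partial_t+\D(\cdot))$ is not available.

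\emph{Kucerovsky's criterion does not apply.} It is a statement about unbounded Kasparov modules, and $(\C,C_0(\R,E),\D(\cdot))$ is not one. First, $(\D(t)\pm i)^{-1}$ need not be compact on $E$. Second, even if it is, it does not vanish as $|t|\to\infty$, because the family is constant there. This is exactly why \cref{prop:00_Kasp_prod_S_D} rescales to $\varphi(t)S(t)$, a trick that presupposes compact resolvents in the fibres. The domain and positivity conditions would also need $C^1$-dependence on $t$ with constant domain, which is not assumed.

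\emph{The Fredholm claim is false in this generality.} Your assertion that invertible endpoints make $\partial_t+\D(t)$ Fredholm is the Robbin–Salamon theorem, which rests on compactness of $\Dom\D(t)\hookrightarrow E$. Without that it fails even for bounded invertible paths. Take $A(t)=R(ct)\sigma_3R(ct)^{-1}$ on $\C^2$ for $t\in[0,\pi/c]$ and $A(t)=\sigma_3$ otherwise, with $R$ the rotation matrices. In the rotating frame the operator becomes $\partial_t+\sigma_3+cJ$ with $J=\mattwo{0}{-1}{1}{0}$ and $(\sigma_3+cJ)^2=1-c^2$. For a suitable $c>1$, the solution decaying at $-\infty$ (which starts in $\C e_2$) lands in $\C e_1$ at $t=\pi/c$, so $\partial_t+A(t)$ has a kernel. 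Then $A(t)\otimes 1_{\ell^2}$ is a norm-continuous path of invertibles, so both sides of the proposition vanish. Yet $\partial_t+A(t)\otimes 1$ has infinite-dimensional kernel. This also breaks your case (1) as you argued it.

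\textbf{How to repair it.} Drop Step 1 and run your Step 2 directly on classes in $\K_{j+1}(C_0(\R,A))$. There, homotopy invariance and additivity under concatenation are formal, and an invertible family (with $\sup_t\|\D(t)^{-1}\|<\infty$) represents $0$. After your reduction via phases and stabilisation, the only product you actually need is for $-1+2\chi$ on $C_0(\R,V)$ with $V$ finitely generated projective. Note that in general $\relind(P,Q)$ is a difference $[V]-[W]$, not a single $[V]$, which additivity handles. On $C_0(\R,V)$ we have $\mK_A(V)=\mL_A(V)$, so the rescaling of \cref{prop:00_Kasp_prod_S_D} gives a genuine unbounded Kasparov module. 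There $\Index(\partial_t+\tanh t)=1$ legitimately supplies the normalisation.

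\textbf{The case $j=1$ is not ``the same with an extra grading''.} The product of a class in $\K_0(C_0(\R,A))$ with the odd class $[-i\partial_t]$ is odd. It is represented by an ungraded operator such as $\D(t)-i\Gamma\partial_t$, whose $\K_1$-class is not an index. You need a separate normalisation: compute $[S]\otimes_{C_0(\R)}[-i\partial_t]$ for the odd basic model $S(t)=\mattwo{0}{(1-\chi)+\chi w^*}{(1-\chi)+\chi w}{0}$ with $w$ unitary, and match it with $\relind_1=[1\oplus U_PU_Q^*]$, sign included (compare \cref{lem:S_G}, which produces $[u^*]$). One way is a further suspension reducing this to the case $j=0$.
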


\section{Computation of a Kasparov product}
\label{app:Kasp_prod}

In this section, we will compute a particular case of a Kasparov product in the unbounded picture \cite{BJ83} of Kasparov's bivariant $\KK$-theory \cite{Kas80b}. 
The computation makes use of the description of the unbounded Kasparov product given by Lesch--Mesland \cite[Theorem 7.4]{LM19}. 
The (infinite volume) spectral localiser naturally appears from the construction of this Kasparov product, and this observation is the crucial ingredient in the results of \cref{sec:index_pairing}. 

Consider unital $C^*$-algebras $A$ and $B$. 
Let $\A \subset A$ be a dense unital $*$-subalgebra, and consider an (even) unbounded Kasparov $A$-$B$-module $(\A,E,\D)$ over $A$ representing a \KK-theory class $[\D] \in \KK^0(A,B)$. 
We assume that the corresponding representation $\pi \colon A \to \mL_B(E)$ is \emph{nondegenerate}, i.e., $\pi(A)\cdot E$ is dense in $E$. 
On the $\Z_2$-graded Hilbert $B$-module $E = E_+ \oplus E_-$ we may write the operator $\D$ and the $\Z_2$-grading $\Gamma_\D$ as 
\[
\D = \mattwo{0}{\D_-}{\D_+}{0} , 
\qquad 
\Gamma_\D = \mattwo{1}{0}{0}{-1} .
\]
Consider also a norm-continuous family $\{S(t)\}_{t\in\R} \subset M_{2n}(A)$ with the following properties:
\begin{itemize}
\item $S(t) = S(-1)$ for all $t\leq-1$ and $S(t) = S(1)$ for all $t\geq1$;
\item $S(-1)$ and $S(1)$ are invertible;
\item $S(t)$ is odd w.r.t.\ the $\Z_2$-grading $\Gamma_S = 1\oplus(-1)$ on $A^{\oplus2n} = A^{\oplus n} \oplus A^{\oplus n}$;
\item $\pi(S(t))$ preserves $\Dom(\D)^{\oplus2n}$, the commutator $[\D,\pi(S(t))]$ is bounded, and $t \mapsto [\D,\pi(S(t))]$ is norm-continuous. 
\end{itemize}
We then obtain an odd self-adjoint Fredholm operator 
\[
S \in C_0\big( \R , M_{2n}(A) \big)
\subset \mL_{C_0(\R,A)}\big(C_0(\R,A^{\oplus2n})\big) ,
\]
which represents a \K-theory class $[S] \in \K_0\big(C_0(\R,A)\big)$ (see \cref{app:Fredholm}). 

We consider a real parameter $\kappa$ satisfying 
\begin{equation}
\label{eq:kappa_Kasp_prod}
0 < \kappa < \min \Big\{ \big\|S(-1)^{-1}\big\|^{-2} \big\|[\D,S(-1)]\big\|^{-1} , \big\|S(1)^{-1}\big\|^{-2} \big\|[\D,S(1)]\big\|^{-1} \Big\} .
\end{equation}

\begin{prop}
\label{prop:00_Kasp_prod_S_D}
The pairing of $[S] \in \K_0\big(C_0(\R)\otimes A\big)$ with $[\D]\in\KK^0(A,B)$ over $A$ is given by 
\[
[S] \otimes_A [\D] 
= [\mL_\kappa] \in \K_0\big( C_0(\R)\otimes B \big) ,
\]
where the self-adjoint Fredholm operator $\mL_\kappa$ on the Hilbert $C_0(\R,B)$-module $C_0(\R,E^{\oplus2n})$ is given by 
\(
\mL_\kappa(t) := \kappa \Gamma_S \D + S(t) ,
\)
and where the parameter $\kappa$ satisfies \cref{eq:kappa_Kasp_prod}. 
\end{prop}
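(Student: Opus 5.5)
We plan to use the Lesch--Mesland criterion \cite[Theorem 7.4]{LM19} for unbounded Kasparov products. First we represent $[S]$ as an unbounded Kasparov $\C$-$C_0(\R,A)$-module. Since $S$ is odd, self-adjoint and Fredholm, and invertible outside $[-1,1]$, we rescale it. We take the module $(\C, C_0(\R,A^{\oplus 2n}), \kappa^{-1}S)$, which is a bounded operator, so the class is unchanged by positive scaling. This is not quite an unbounded Kasparov module with compact resolvent over the non-unital algebra $C_0(\R,A)$. We therefore prefer the description of $[S]$ as in \cref{app:Fredholm}: a self-adjoint Fredholm operator on $E_1 := C_0(\R,A^{\oplus2n})$ whose class in $\KK^0(\C,C_0(\R,A))$ is that of $(\C,E_1,S)$ modified by a trivialising perturbation. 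The balanced tensor product is $E_1\otimes_A E \cong C_0(\R,E^{\oplus 2n})$, using nondegeneracy of $\pi$. The candidate product operator is the graded sum $S\otimes 1 + 1\otimes_\nabla \D$. With the grading $\Gamma_S\otimes\Gamma_\D$, the odd operator $\D$ must be twisted to $\Gamma_S\D$ so that it anticommutes with $S$ up to bounded terms. This gives $\mL_\kappa(t)=\kappa\Gamma_S\D + S(t)$, where the scaling $\kappa$ is allowed because $[\kappa\D]=[\D]$ in $\KK^0(A,B)$.

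The key steps follow. (1) Self-adjointness and regularity: $\mL_\kappa$ is a bounded self-adjoint perturbation $S$ of the regular self-adjoint operator $\kappa\Gamma_S\D$ on $C_0(\R,E^{\oplus2n})$, acting pointwise in $t$. (2) Fredholmness: we compute
\[
\mL_\kappa(t)^2 = \kappa^2\D^2 + S(t)^2 + \kappa\,\Gamma_S[\D,S(t)]
\]
up to ordering, using that $S(t)$ is odd for $\Gamma_S$. For $|t|\geq1$, \cref{eq:kappa_Kasp_prod} gives $\mL_\kappa(t)^2\geq c>0$, exactly as in \cref{lem:even_spec_loc_inv}. Hence $\mL_\kappa$ is invertible at infinity in $t$. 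On $[-1,1]$ we use that $\D$ has $B$-compact resolvent, so $\mL_\kappa$ restricted there is a compact-resolvent type operator. Together these give a parametrix modulo $\mK(C_0(\R,E^{\oplus2n}))$. (3) Product conditions: we check the connection condition, that $[\,\Gamma_S\D\,, x]$ is bounded for $x$ in a dense set of $E_1$ (here $C_c^\infty$-type elements with $\A$-entries), using boundedness of $[\D,a]$. We also check the positivity or semi-boundedness condition, namely that $[\kappa\Gamma_S\D, S]$ is relatively bounded. This holds since by assumption $t\mapsto[\D,S(t)]$ is bounded and norm-continuous.

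The main obstacle is that $[S]$ is given by a bounded Fredholm operator rather than an unbounded module with compact resolvent, and $\mL_\kappa$ likewise does not have compact resolvent. Lesch--Mesland therefore does not apply verbatim. To handle this, we would first replace $S$ by an odd self-adjoint Fredholm representative, using the $\K_0(C_0(\R,A))$-class construction of \cite[\S2.2]{vdD19_Index_DS}. That construction adds a compact trivialising term which makes the operator invertible, or which yields a genuine Kasparov module $(\C, C_0(\R,A^{\oplus 2n}), \cdot)$. We would then apply the product theorem. Finally we would homotope back to $\mL_\kappa$ through self-adjoint Fredholm operators, using stability under relatively compact perturbations (\cite[Proposition A.11]{vdD25_Callias_25a}). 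The invertibility of $\mL_\kappa(t)$ for $|t|\geq1$ from step (2) is what keeps this homotopy inside the Fredholm operators uniformly in $t$.
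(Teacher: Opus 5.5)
Your framework is the paper's: Lesch--Mesland's Theorem 7.4 applied to $S$ and $T=\kappa\Gamma_S\D$, the $\Gamma_S$-twist, and Fredholmness of $\mL_\kappa$ from the compact resolvents of $\D$ together with invertibility of $\mL_\kappa(\pm1)$. You also correctly identify the obstacle, namely that neither $S$ nor $\mL_\kappa$ has compact resolvents. However, the fix you sketch for it does not work, and this step is the heart of the proof.

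\begin{itemize}
\item No bounded operator on $C_0(\R,A^{\oplus2n})$ can have compact resolvents, since otherwise $1=(S+i)(S+i)^{-1}$ would be compact. Adding a compact term does not change this.
\item If you add an odd self-adjoint compact $K$ making $S$ invertible, then $[S]=[S+K]=0$. Trivialising perturbations exist only for vanishing classes, whereas $[S]$ is in general the Bott image of a nonzero class (e.g.\ $S=S_G$).
\item Your alternative, ``a genuine Kasparov module'' from some unspecified unbounded lift, is left open. For a generic lift there is no reason that condition (iii) (relative boundedness of the anticommutator with $\kappa\Gamma_S\D$) holds, or that the resulting product operator is connected to $\mL_\kappa$.
\item The homotopy back cannot come from stability under relatively compact perturbations. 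Any compact-resolvent representative differs from $S$ by an \emph{unbounded} operator.
\end{itemize}

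The missing idea is an unbounded lift that is a scalar-function multiple of $S$ itself. Take $\varphi\colon\R\to[1,\infty)$ with $\varphi=1$ on $[-1,1]$ and $\varphi(t)\to\infty$, and set $S'=\varphi S$.
\begin{itemize}
\item Since $S(t)=S(\pm1)$ is invertible for $|t|\geq1$, the resolvent of $S'$ lies in $C_0(\R,M_{2n}(A))$. Hence $(\C,C_0(\R,A^{\oplus2n}),S')$ is an unbounded Kasparov module.
\item $[S']=[S]$ via the homotopy $\varphi^sS$, $s\in[0,1]$, which has parametrix $(1-\chi)(\varphi^sS)^{-1}$ for $\chi\in C_c([0,1]\times\R)$ equal to $1$ on $[0,1]\times[-1,1]$.
\item Since $S$ is odd for $\Gamma_S$, the anticommutator $[\kappa\Gamma_S\D,S']_+=\kappa\varphi\Gamma_S[\D,S]$ is $\varphi$ times a bounded operator. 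As $S$ is invertible wherever $\varphi\neq1$, it is relatively $S'$-bounded, so the Lesch--Mesland conditions hold.
\item $\mL'_\kappa=\kappa\Gamma_S\D+\varphi S$ is regular and self-adjoint (Kaad--Lesch) with compact resolvents, and it represents the product.
\item $\kappa\Gamma_S\D+\varphi^sS$ connects $\mL_\kappa$ to $\mL'_\kappa$ through Fredholm operators. For $|t|\geq1$ and $c=\varphi(t)^s\geq1$, its square is at least $c^2\|S(\pm1)^{-1}\|^{-2}-c\kappa\|[\D,S(\pm1)]\|\geq c\bigl(\|S(\pm1)^{-1}\|^{-2}-\kappa\|[\D,S(\pm1)]\|\bigr)>0$ by the assumption on $\kappa$. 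This gives a parametrix as before.
\end{itemize}
This last point is where your remark about invertibility for $|t|\geq1$ really enters, but as uniform invertibility along an unbounded homotopy, not via relatively compact perturbations.
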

\begin{proof}
The operator $\mL_\kappa$ is given by a family $\{\mL_\kappa(t)\}_{t\in\R}$ of self-adjoint operators with constant domain $\Dom \mL_\kappa(t) = (\Dom\D)^{\oplus2n}$, such that $\mL_\kappa(t) \in \mL_B\big((\Dom\D)^{\oplus2n},E^{\oplus2n}\big)$ depends norm-continuously on $t$. Hence $\mL_\kappa$ defines a regular self-adjoint operator on the Hilbert $C_0(\R,B)$-module $C_0(\R,E^{\oplus2n})$, which is odd with respect to the $\Z_2$-grading $\Gamma_S \Gamma_\D$. 
Since $\D$ has compact resolvents and $\mL_\kappa(t) - \kappa \Gamma_S \D$ is bounded, we see that 
$\mL_\kappa(t)$ also has compact resolvents, 
and in particular each $\mL_\kappa(t)$ is Fredholm. 
Moreover, the operators $\mL_\kappa(\pm1) = \kappa \Gamma_S \D + S(\pm1)$ are invertible by the same argument as in \cref{lem:even_spec_loc_inv,lem:odd_spec_loc_inv} (using the assumption \eqref{eq:kappa_Kasp_prod}). 
We then obtain a (left and right) parametrix $Q$ for $\mL_\kappa$ given by $Q(t) = (1-\chi(t)) \mL_\kappa(t)^{-1} + \chi(t) (\mL_\kappa(t)+i)^{-1}$, for any $\chi \in C_c(\R)$ such that $\chi(t) = 1$ for all $t\in[-1,1]$. 
Thus $\mL_\kappa$ is Fredholm and therefore yields a well-defined class $[\mL_\kappa] \in \K_0\big(C_0(\R,B)\big)$. 

We may choose a smooth function $\varphi \colon \R \to [1,\infty)$ such that $\varphi(t) = 1$ for all $t\in[-1,1]$ and $\lim_{|t|\to\infty}\varphi(t) = \infty$. 
Then the operator $S'$ given by $S'(t) := \varphi(t) S(t)$ has compact resolvents and therefore defines an unbounded Kasparov $\C$-$C_0(\R)\otimes A$-module $\big( \C , C_0(\R,A^{\oplus2n}) , S' \big)$, such that $[S'] = [S] \in \K_0\big( C_0(\R)\otimes A \big)$. 
Indeed, the norm-continuous family $[0,1]\times\R \ni (s,t) \mapsto \varphi(t)^s S(t) \equiv S_s(t)$ yields a regular self-adjoint operator $S_\bullet$ on $C_0\big([0,1]\times\R,A^{\oplus2n}\big)$, which is Fredholm with a parametrix given by $(s,t) \mapsto (1-\chi(s,t)) S_s(t)^{-1}$, for any $\chi \in C_c\big([0,1]\times\R\big)$ with $\chi(s,t) = 1$ for all $(s,t) \in [0,1] \times [-1,1]$. Thus $S_\bullet$ provides a homotopy between $S$ and $S'$, so that $[S] = [S']$. 

Similarly, we may replace $\mL_\kappa$ by the operator $\mL'_\kappa := \kappa \Gamma_S \D + S'$. 
Indeed, since $[\D,S]$ is bounded, the graded commutator $[\Gamma_S \D,S']$ is relatively bounded by $S'$, so that $\mL'_\kappa$ is regular and self-adjoint on the domain $\Dom(\D) \cap \Dom(S')$ (cf.\ \cite[Theorem 7.10]{KL12}). 
Since $S'$ has compact resolvents, it then follows that also $\mL'_\kappa$ has compact resolvents and therefore defines an unbounded Kasparov $\C$-$C_0(\R,B)$-module $\big( \C , C_0(\R,E^{\oplus2n}) , \mL'_\kappa \big)$. 
The regular self-adjoint operator $\mL_{\kappa,\bullet}$ given by $\mL_{\kappa,s}(t) := \kappa \Gamma_S \D + S_s(t)$ has a parametrix given by $(s,t) \mapsto (1-\chi(s,t)) \mL_{\kappa,s}(t)^{-1} + \chi(s,t) (\mL_{\kappa,s}(t)+i)^{-1}$, so it provides a homotopy between $\mL_\kappa$ and $\mL'_\kappa$ and we therefore have that $[\mL'_\kappa] = [\mL_\kappa] \in \K_0\big( C_0(\R,B) \big)$. 

It remains to show that $\mL'_\kappa$ represents the (unbounded) Kasparov product of $S'$ with $\D$. 
We will compute the Kasparov product by applying \cite[Theorem 7.4]{LM19} to the operators $S'$ and $T := \kappa \Gamma_S \D$ on $C_0\big(\R,E^{\oplus2n}\big)$.
Note that $A$ is unital, and there is a dense $*$-subalgebra $\A\subset A$ such that $[\D,a]$ is bounded for all $a\in\A$. Since $\D$ acts diagonally on $E^{\oplus2n}$, condition (i) of \cite[Theorem 7.4]{LM19} is satisfied by the dense submodule $C_c\big(\R,\A^{\oplus2n}\big)$. 
Condition (ii) is trivially satisfied ($\C$ commutes with $T$). 
Finally, condition (iii) is satisfied because the commutator $[\D,S]$ is bounded, so that the anticommutator $[T,S']_+$ is relatively bounded by $S'$. 
Hence the statement of \cite[Theorem 7.4]{LM19} shows that $S' + T = \varphi S + \kappa \Gamma_S \D = \mL'_\kappa$ indeed represents the Kasparov product of $[S']=[S]$ with $[\kappa\D] = [\D]$. 
\end{proof}

\begin{coro}
\label{coro:10_Kasp_prod_S_D}
In the setting of \cref{prop:00_Kasp_prod_S_D}, consider now instead a \emph{trivially graded} element $S \in C_0\big( \R, M_n(A) \big)$. 
The pairing of $[S] \in \K_1\big(C_0(\R)\otimes A\big)$ with $[\D]\in\KK^0(A,B)$ over $A$ is then given by 
\[
[S] \otimes_A [\D] 
= [\mL^\ev_\kappa] \in \K_1\big( C_0(\R)\otimes B \big) ,
\]
where the self-adjoint Fredholm operator $\mL^\ev_\kappa$ on the Hilbert $C_0(\R,B)$-module $C_0(\R,E^{\oplus n})$ is given by 
\(
\mL^\ev_\kappa(t) := \kappa \D + \Gamma_\D S(t) ,
\)
and where the parameter $\kappa$ satisfies \cref{eq:kappa_Kasp_prod}. 
\end{coro}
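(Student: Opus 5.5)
The plan is to reduce the corollary to \cref{prop:00_Kasp_prod_S_D} by the standard Clifford doubling that identifies odd \K-theory with $\Z_2$-graded, $\C\ell_1$-equivariant even \K-theory. Concretely, I would represent $[S] \in \K_1\big(C_0(\R)\otimes A\big) \simeq \KK\big(\C, C_0(\R,A)\otimes\C\ell_1\big)$ by the doubled operator
\[
\tilde S := \mattwo{0}{S}{S}{0} \quad\text{on } C_0(\R,A^{\oplus n})\oplus C_0(\R,A^{\oplus n}),
\]
graded by $\Gamma_S = 1\oplus(-1)$. This space carries the additional odd Clifford generator $c := \mattwo{0}{-i}{i}{0}$. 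The operator $c$ is self-adjoint with $c^2=1$, it anticommutes with $\Gamma_S$, and it graded-commutes with $\tilde S$, meaning $c\tilde S = -\tilde S c$. The family $\tilde S(t)$ inherits all the hypotheses of \cref{prop:00_Kasp_prod_S_D}. Moreover, since $\tilde S^2 = S^2\oplus S^2$ and $\|[\D,\tilde S]\| = \|[\D,S]\|$, the admissible range \eqref{eq:kappa_Kasp_prod} for $\kappa$ is unchanged.

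Next I would rerun the proof of \cref{prop:00_Kasp_prod_S_D} in the $\C\ell_1$-equivariant setting, carrying $c$ along. The homotopies $S\mapsto\varphi^s S$ and the Lesch--Mesland criterion \cite[Theorem 7.4]{LM19} are compatible with the extra Clifford action, because $c$ commutes with $\D$, which acts diagonally, and anticommutes with $\tilde S$. The conclusion is that $[S]\otimes_A[\D]$ is represented by
\[
\tilde\mL_\kappa(t) = \kappa\Gamma_S\D + \tilde S(t) = \mattwo{\kappa\D}{S(t)}{S(t)}{-\kappa\D}
\]
on $C_0(\R,E^{\oplus n})^{\oplus 2}$. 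This operator carries the total grading $\Gamma_S\Gamma_\D$ and the Clifford generator $c$. One has to check that $c$ still anticommutes with $\tilde\mL_\kappa$ and with the total grading. It anticommutes with $\Gamma_S\D$ because it anticommutes with $\Gamma_S$ and commutes with $\D$.

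The final step is to undo the doubling. I would set $\omega := i\,c\,\Gamma_S\Gamma_\D$ and check three things: $\omega$ is a self-adjoint unitary, it commutes with $\tilde\mL_\kappa$, and its two eigenspaces are exchanged by nothing that matters. Then the ungraded class in $\K_1(C_0(\R)\otimes B)$ is represented by the restriction of $\tilde\mL_\kappa$ to $\Ran\frac12(1+\omega)$. On that eigenspace, a vector $(x,y)$ satisfies $y = \pm i\Gamma_\D x$ (up to convention), and substituting into $\tilde\mL_\kappa$ gives
\[
\tilde\mL_\kappa|_{\Ran\frac12(1+\omega)} \cong \kappa\D + \Gamma_\D S(t) = \mL^\ev_\kappa(t).
\]
This uses that $\Gamma_\D$ anticommutes with $\D$ and commutes with $S$.

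The main obstacle is this last bookkeeping step. I need to fix the sign conventions of the isomorphism $\KK^1 \simeq \KK(\cdot,\cdot\otimes\C\ell_1)$ consistently on both sides, so that the eigenspace restriction produces $+\Gamma_\D S$ rather than $-\Gamma_\D S$, that is, the class and not its negative. I would fix the convention by testing it on a trivial case: take $B=\C$ and $\D$ with a one-dimensional kernel in $E_+$, and compare the resulting spectral flow with the index computation in \cref{lem:S_H}.
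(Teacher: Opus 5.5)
Your proposal is correct and is essentially the paper's argument: the paper simply combines \cref{prop:00_Kasp_prod_S_D} with the odd--even product formula of \cite[Example 2.38]{BMS16}, and you reconstruct that formula by hand through Clifford doubling and undoubling. The only slip is in the final bookkeeping, where with your $c$ one gets $\omega=\mattwo{0}{-\Gamma_\D}{-\Gamma_\D}{0}$, so the eigenspaces are $y=\mp\Gamma_\D x$ (no factor $i$), and the sign needs no test case: the $-1$-eigenspace $(x,x)$ of $ic\Gamma_S$ recovers $S$ from $\tilde S$, and the matching $-1$-eigenspace $y=\Gamma_\D x$ of $\omega$ gives exactly $\kappa\D+\Gamma_\D S$.
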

\begin{proof}
The statement follows by combining \cref{prop:00_Kasp_prod_S_D} with the description of the odd-even Kasparov product given in \cite[Example 2.38]{BMS16}. 
\end{proof}

\begin{coro}
\label{coro:01_Kasp_prod_S_D}
In the setting of \cref{prop:00_Kasp_prod_S_D}, consider now instead an \emph{odd} unbounded Kasparov $A$-$B$-module $(\A,E,\D)$. 
The pairing of $[S] \in \K_0\big(C_0(\R)\otimes A\big)$ with $[\D]\in\KK^1(A,B)$ over $A$ is then given by 
\[
[S] \otimes_A [\D] 
= [\mL^\od_\kappa] \in \K_1\big( C_0(\R)\otimes B \big) ,
\]
where the self-adjoint Fredholm operator $\mL^\od_\kappa$ on the Hilbert $C_0(\R,B)$-module $C_0(\R,E^{\oplus2n})$ is given by 
\(
\mL^\od_\kappa(t) := \kappa \Gamma_S \D + S(t) ,
\)
and where the parameter $\kappa$ satisfies \cref{eq:kappa_Kasp_prod}. 
\end{coro}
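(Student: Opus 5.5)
The plan is to deduce the statement from \cref{prop:00_Kasp_prod_S_D}, exactly as \cref{coro:10_Kasp_prod_S_D} was obtained: turn the odd Kasparov module into an even one by doubling, and then translate the resulting operator back. I will use the standard Clifford-doubling description of odd Kasparov modules (cf.\ \cite[Example 2.38]{BMS16}). An odd unbounded Kasparov $A$-$B$-module $(\A,E,\D)$ corresponds to the even module
\[
\big(\A, E\oplus E, \hat\D\big), \qquad \hat\D = \mattwo{0}{-i\D}{i\D}{0}, \qquad \Gamma_{\hat\D} = 1\oplus(-1),
\]
with $A$ acting diagonally and an extra odd Clifford generator $c = \mattwo{0}{1}{1}{0}$ that anticommutes with $\hat\D$. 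Under this correspondence $\KK^1(A,B)\cong\KK^0(A\hat\otimes\C\ell_1,B)$. The Kasparov product of a $\K_0$-class with this class then lands in $\K_1(C_0(\R)\otimes B)$, represented by a $\C\ell_1$-equivariant even operator, which can in turn be read back as an ungraded self-adjoint operator.

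First I will apply \cref{prop:00_Kasp_prod_S_D} to $[S]$ and the doubled module (with the Clifford action carried along). This gives the product as the class of $\kappa\Gamma_S\hat\D + S(t)$ on $C_0(\R,(E\oplus E)^{\oplus 2n})$. Since $S$ is diagonal in the doubling and $\Gamma_S$ commutes with the doubling, this operator commutes with $c$ and is odd for the total grading $\Gamma_S\Gamma_{\hat\D}$. Note also that the hypothesis \eqref{eq:kappa_Kasp_prod} is unchanged, because $\|[\hat\D,S]\| = \|[\D,S]\|$.

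Second, I will undo the doubling. The standard dictionary identifies a graded, $\C\ell_1$-equivariant odd operator with the ungraded self-adjoint operator obtained by restricting $(\text{grading})\cdot c\cdot(\text{operator})$, up to a sign and a factor of $i$, to the $+1$ eigenspace of a suitable product of the grading operators. Carrying this out, $\kappa\Gamma_S\hat\D + S$ should correspond to $\kappa\Gamma_S\D + S(t)$ on $E^{\oplus 2n}$. The one thing to check is that this identification preserves the relevant structure, so that it sends the class to $[\mL^\od_\kappa]\in\K_1(C_0(\R)\otimes B)$ and not to its negative.

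The main obstacle is this bookkeeping of signs and conventions. The orientation of the Clifford generator, and the sign conventions in the isomorphism $\KK^1\cong\KK^0(\cdot\hat\otimes\C\ell_1,\cdot)$ and in the odd-even product of \cite{BMS16}, must all be consistent, so that the identity holds on the nose and not only up to sign. I would settle this by fixing the conventions of \cite[Example 2.38]{BMS16} and cross-checking against the trivial case $S(t)=S(\pm1)$ constant, where both sides must vanish. A further sanity check is that the invertibility of the endpoints $\mL^\od_\kappa(\pm1)$ follows as in \cref{lem:odd_spec_loc_inv}, so that the resulting operator is Fredholm.
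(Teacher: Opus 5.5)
Your route is the paper's. Its proof just combines \cref{prop:00_Kasp_prod_S_D} with the even--odd product formula of \cite[Example 2.37]{BMS16}, and that formula is exactly what doubling, applying the even result, and undoubling produces. (Example 2.38, which you cite, is the odd--even case used for \cref{coro:10_Kasp_prod_S_D}.) However, the bookkeeping you single out as the main obstacle is where your sketch goes wrong. Your $c$ anticommutes with $\hat\D$, so the operator $T:=\kappa\Gamma_S\hat\D+S(t)$ does \emph{not} commute with $c$: $c$ commutes with $S(t)$ but anticommutes with $\kappa\Gamma_S\hat\D$. Hence neither your equivariance claim nor an undoubling built from $c$ works. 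On the product module the Clifford generator picks up a Koszul sign and becomes $\epsilon:=\Gamma_S c$. This $\epsilon$ is odd for $\gamma:=\Gamma_S\Gamma_{\hat\D}$, satisfies $\epsilon^2=1$, and anticommutes with $T$, as it should. Consequently $T$ commutes with the self-adjoint unitary $i\epsilon\gamma=ic\Gamma_{\hat\D}=\mattwo{0}{-i}{i}{0}$ acting on the doubling. Since $\hat\D=\D\otimes\mattwo{0}{-i}{i}{0}$, the undoubling is simply the restriction of $T$ to an eigenspace of this unitary, which gives $S(t)\pm\kappa\Gamma_S\D$ on the $\pm1$ eigenspace.

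This also shows that your proposed cross-check cannot settle the sign. Since $\Gamma_S\big(S(t)-\kappa\Gamma_S\D\big)\Gamma_S=-\mL^\od_\kappa(t)$, the two eigenspace choices give $\pm[\mL^\od_\kappa]$. For constant $S$, however, both sides of the identity vanish, so that test cannot distinguish the sign. The sign is a genuine convention, built into the isomorphism $\KK^1(A,B)\cong\KK^0(A\hat\otimes\C\ell_1,B)$ (equivalently, into the orientation of your doubling). It must be fixed compatibly with the conventions of \cite{BMS16} and with the graded commutativity of the exterior product used in the proof of \cref{thm:odd_index_pairing_spec_loc}. That is what the paper's one-line appeal to \cite[Example 2.37]{BMS16} does. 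Once you use $\epsilon=\Gamma_S c$ and fix the eigenspace by that convention (here, the one on which $\hat\D$ restricts to $+\D$), your argument gives exactly $[\mL^\od_\kappa]$.
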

\begin{proof}
The statement follows by combining \cref{prop:00_Kasp_prod_S_D} with the description of the even-odd Kasparov product given in \cite[Example 2.37]{BMS16}. 
\end{proof}

\small 


\providecommand{\noopsort}[1]{}\providecommand{\Dunsort}{}\providecommand{\DMsort}{}
\providecommand{\bysame}{\leavevmode\hbox to3em{\hrulefill}\thinspace}
\providecommand{\MR}{\relax\ifhmode\unskip\space\fi MR }
\providecommand{\MRhref}[2]{%
  \href{http://www.ams.org/mathscinet-getitem?mr=#1}{#2}
}
\providecommand{\href}[2]{#2}
\providecommand{\doi}[1]{\href{https://doi.org/#1}{doi:#1}}
\providecommand{\doilinktitle}[2]{#1}
\providecommand{\doilinkbooktitle}[2]{\href{https://doi.org/#2}{#1}}
\providecommand{\doilinkjournal}[2]{\href{https://doi.org/#2}{#1}}
\providecommand{\doilinkvynp}[2]{\href{https://doi.org/#2}{#1}}
\providecommand{\eprint}[2]{#1:\href{https://arxiv.org/abs/#2}{#2}}

\end{document}